\documentclass[reqno]{amsart}
\usepackage{amsmath,amsthm,amscd,amssymb,amsfonts, amsbsy}
\usepackage{latexsym, color, enumerate}
\usepackage{pxfonts}

\usepackage[hypertex]{hyperref}

\theoremstyle{plain}
\newtheorem{theorem}{Theorem}[section]
\newtheorem{lemma}[theorem]{Lemma}
\newtheorem{corollary}[theorem]{Corollary}

\theoremstyle{definition}
\newtheorem{definition}[theorem]{Definition}

\newtheorem{assumption}[theorem]{Assumption}

\theoremstyle{remark}
\newtheorem{remark}[theorem]{Remark}

\numberwithin{equation}{section}

\newcommand{\bR}{\mathbb{R}}

\providecommand{\set}[1]{\{#1\}}

\providecommand{\abs}[1]{\lvert#1\rvert}

\providecommand{\norm}[1]{\lVert#1\rVert}


\def\Xint#1{\mathchoice
	{\XXint\displaystyle\textstyle{#1}}%
	{\XXint\textstyle\scriptstyle{#1}}%
	{\XXint\scriptstyle\scriptscriptstyle{#1}}%
	{\XXint\scriptscriptstyle\scriptscriptstyle{#1}}%
	\!\int}
\def\XXint#1#2#3{{\setbox0=\hbox{$#1{#2#3}{\int}$}
		\vcenter{\hbox{$#2#3$}}\kern-.5\wd0}}
\def\dashint{\Xint-}

\newcommand{\p}{\partial}
\newcommand{\epsi}{\varepsilon}

\begin{document}

%
%
\subjclass[2010]{Primary 35J25, 35B65; Secondary 35J15}
\keywords{Oblique derivative problem, Lipschitz domains, $W^2_p$ estimate and solvability, fully nonlinear elliptic equations.}

	\title[oblique derivative problem]{On the $W^2_p$ estimate for oblique derivative problem in Lipschitz domains}
	
	\author[Hongjie Dong]{Hongjie Dong}	
	
	\address{Brown University, Division of Applied Mathematics,
		Providence RI 02906}
	
	\email{hongjie$\_$dong@brown.edu}
\thanks{H. Dong and Z. Li were partially supported by the NSF under agreement DMS-1600593.}
	
	\author[Zongyuan Li]{Zongyuan Li}
	
	\address{Brown University, Division of Applied Mathematics,
		Providence RI 02906}
	
	\email{zongyuan$\_$li@brown.edu}
	
\begin{abstract}
The aim of this paper is to establish $W^2_p$ estimate for non-divergence form second-order elliptic equations with the oblique derivative boundary condition in domains with small Lipschitz constants. Our result generalizes those in \cite{LiebermanBookParabolic, LiebermanBook}, which work for $C^{1,\alpha}$ domains with $\alpha > 1-1/p$. As an application, we also obtain a solvability result. An extension to fully nonlinear elliptic equations with the oblique derivative boundary condition is also discussed.
\end{abstract}
\today
\maketitle
\section{Introduction}
In this paper, we consider $W^2_p$ estimate for oblique derivative problem. For the problem \begin{equation*}
\begin{cases}
Lu :=a_{ij}D_{ij}u + a_iD_i u + a_0 u = f \quad &\mbox{in } \Omega,\\
Bu :=b_0 u + b_iD_i u = g \quad &\mbox{on } \partial \Omega,
\end{cases}
\end{equation*}
we aim to prove
$$\|u\|_{W^2_p(\Omega)} \leq N(\|f\|_{L_p(\Omega)}+\|g\|_{W^{1-1/p}_p(\p\Omega)} + \|u\|_{L_p(\Omega)}).$$
Here $L$ is uniformly elliptic, and oblique derivative means for some $\delta \in(0,1]$ $b \cdot n \geq \delta|b|$ almost everywhere. Here $n$ is the outer normal direction, which is defined almost everywhere on $\p\Omega$.

As the $W^2_p$ estimate for elliptic equations with uniformly continuous coefficients in smooth (say $C^{1,1}$) domains has been well studied for a long time (see e.g., \cite[Theorem~9.13]{GilbargTrudinger}), people are more interested in the case of discontinuous coefficients or rough domains.

Concerning discontinuous coefficients, the case when $a_{ij}$ belongs to the class of vanishing mean oscillation (VMO) is of particular interest. We say a function $f \in \text{VMO}$ if
\begin{equation*}
\omega(\rho):= \sup_{x, 0<r<\rho}\dashint_{B_r(x)}|f(y)-(f)_{B_r(x)}|\,dy \rightarrow 0 \text{ as } \rho \rightarrow 0,
\end{equation*}
where $(f)_{B_r(x)} := \frac{1}{|B_r|}\int_{B_r(x)}f(y)\,dy$ is the average of $f$ in $B_r(x)$.
		
The $W^2_p$ estimate for equations with VMO coefficients was first established by Chiarenza, Frasca, and Longo in \cite{VMOChiarenzaFrascaLongoInterior}. The method is mainly based on the representation formula: the Calder\'on-Zygmund theorem together with a commutator estimate. Later in \cite{KrylovSharp}, based on the so-called ``sharp function'' estimate and the Fefferman-Stein theorem, Krylov gave a unified proof of the $W^2_p$ estimate for parabolic/elliptic equations with VMO-in-$x$ coefficients. Furthermore, using this method, it is possible to relax the regularity assumptions on $a_{ij}$. Here we mention the following ``partially VMO'' condition for elliptic equations, which is extremely useful in discussing boundary value problems. Such ``partially VMO'' can be written as:
$$\omega'(\rho):=\sup_{x,0<r<\rho} \dashint_{x^d-r}^{x^d+r}\dashint_{B'_r(x')}|f(y',y^d)-(f)_{B'_r(x')}(y^d)|\,dy'dy^d \rightarrow 0, \text{ as } \rho \rightarrow 0,$$
where $x=(x',x^d)$ and $B'_r(x')$ is $(d-1)$-dimension ball. Such theory was developed in \cite{KimKrylov} by Kim and Krylov for $p>2$, and in \cite{DongVMOallp} by the first author here for all $p\in(1,\infty)$. It is worth noting that in \cite{DongVMOallp}, a more general regularity assumption called ``hierarchically partially VMO'' was discussed.

Such results allow us to consider Dirichlet or Neumann problems with VMO coefficients in the half space $\bR^d_{+}$: by simple extension and reflection we get equations in $\bR^d$ with ``partially-VMO'' coefficients. Based on this, in \cite{KimKrylov} a $W^2_p$ estimate for the oblique derivative problem in $\bR^d_{+}$ is also discussed via a perturbation argument. It turns out that the perturbation argument requires $b \in C^{\alpha}, \alpha > 1-1/p$. For details and history of discussing equations with ``partially VMO'' coefficients, one may refer to \cite{DongBookChapter,Krylovbook}.

In this paper, we will focus on general bounded domain $\Omega$ and its regularity assumptions regarding $W^2_p$ estimate. First noting that, by flattening the boundary, the aforementioned $W^2_p$ estimate in the half space will simply lead to corresponding results in $C^{1,1}$ domains. This is because a $C^{1,1}$ change of variables will preserve all the regularity assumptions on the elliptic operator $L$ and the boundary condition. Also, $W^2_p$ norms under these two coordinates are comparable.

For the oblique derivative problem, however, the smoothness assumption for $\Omega$ can be relaxed. The idea is to consider an extension problem in curved domain, which will reduce the boundary condition to be homogeneous. This will compensate the lack of regularity in our change of variables when flattening the boundary. In this way, Lieberman reduced the assumption to $\Omega \in C^{1,\alpha}, \alpha > 1-1/p$ in \cite{LiebermanBookParabolic, LiebermanBook}. In our paper, with the help of Hardy's inequality, we employ a new idea of extension. Together with a perturbation argument, we get the $W^2_p$ estimate in any small Lipschitz domain, i.e., domain with local representation function having sufficiently small Lipschitz constant.

We would like to mention that, there is also a ``Schauder type'' $C^{2,\alpha}$ estimate for oblique derivative problems in $C^{1,\alpha}$ domains. Such result was obtained by Lieberman in \cite{Lieberman}. One could notice that our result is in the same spirit: the regularity assumption on $\p\Omega$ is one derivative less than the corresponding Dirichlet problem. Later in \cite{Safonov}, Safonov came up with an alternative proof for this problem. His proof also includes an extension problem, which actually motivates us of this paper. It is worth noting that our perturbation argument in proving Theorem \ref{maintheorem} can replace Theorem 2.1 in \cite{Safonov}, which is to find a $C^{2,\alpha}$ diffeomorphism mapping $b\cdot D$ to $\frac{\p}{\p y^d}$. This can also be used to simplify the proof in \cite{Safonov}.

The structure of the paper is as follows. In Section \ref{notationsection} we first introduce the basic setups and notation, and then present our main result of the $W^2_p$ estimate in Theorem \ref{maintheorem}. We state the corresponding result regarding the existence and uniqueness of $W^2_p$ solutions in Theorem \ref{well-posednessThm}. Next in Section \ref{regularizedsection} we introduce the cylindrical neighborhood and a special choice of orthonormal systems which we are going to work with for the boundary estimate. Then as a preparation, we introduce the regularized distance which is a useful tool for rough boundaries. With all these, the proof of our main result Theorem \ref{maintheorem} is given in Section \ref{mainstepssection}. The solvability result Theorem \ref{well-posednessThm} is proved in Section \ref{solvabilitysection}. One advantage of our proof is that it also works for nonlinear equations with proper convexity conditions. We will prove one such result for Bellman equations in Section \ref{nonlinearsection}.

\section{Notation and Main Result}\label{notationsection}

Let $d>1$ be a positive integer. In this paper, we denote a point $x$ in $\mathbb{R}^d$ by $x=(x^1,x^2,\ldots,x^d)=(x',x^d)$. Also, we denote $$\mathbb{R}^d_{+} = \{x^d >0\},\quad  B_{R}^{+}=B_R\cap \mathbb{R}^d_{+}, \quad \text{and } B'_{R}=\{x'\in\mathbb{R}^{d-1} : |x'|<R\}.$$
Besides the usual partial derivative symbol $\frac{\p}{\p x^i}$, we use the following notation:
\begin{equation*}
D_i u = u_{x^i},\quad D_{ij}u = u_{x^i x^j}.
\end{equation*}
We write $W^k_p$ for Sobolev spaces, i.e., functions themselves and all derivatives up to order $k$ lie in $L_p$, and $\mathring{W}^1_p(\Omega)=\overline{C^\infty_c(\Omega)}$ under the $W^1_p$ norm. For fractional Sobolev spaces $W^s_p, s\in(0,1)$, the interpolation definition is used. Notice that $W^{1-1/p}_p(\p\Omega)$ is exactly the trace space of $W^1_p(\Omega)$ for $C^1$ (or small Lipschitz) domain. For simplicity, in this paper sometimes we write
$$
\|\cdot\|_p=\|\cdot\|_{L_p},\quad
\|\cdot\|_{p,\Omega}=\|\cdot\|_{L_p(\Omega)},\quad
\|\cdot\|_{1,p}=\|\cdot\|_{W^1_p},\quad\text{etc.}
$$
The summation convention, for instance,
$$
b\cdot D = b_iD_i : = \sum_{i=1}^{d}b_iD_i,
$$
is adopted throughout this paper.

For a Lipschitz continuous function $f$, denote $$[f]_1:=\sup_{x\neq y}\frac{|f(x)-f(y)|}{|x-y|}$$ for its Lipschitz constant. We write $C^{k,1}$ for the class of $k$-th order continuously differentiable functions with all $k$-th order derivatives being Lipschitz continuous.

In this paper, we will use the following notation for the average: $$(f)_{B_r(x)}=\dashint_{B_r(x)}f(y)\,dy:=\frac{1}{|{B_r(x)}|}\int_{_{B_r(x)}}f(y)\,dy.$$
	
When proving inequalities, we will use $N$ for the absolute constant (to be more specific, independent of the local radius parameter $r$ or $R$ in this paper). In the middle steps, we will omit the dependence of $N$ on $\Omega, a_{ij}, b_{i}$, etc. Also, $N$ may vary from line to line.

The main purpose of this paper is to derive the $W^2_p$ estimate in small Lipschitz domains. Let us first give the formal definition of an $\epsi_0$-Lipschitz domain.
\begin{definition}[$\epsi_0$-Lipschitz domain]\label{Defsmalllipschitzdomain}
A bounded domain $\Omega \subset \mathbb{R}^d$ is said to be $\epsi_0$-Lipschitz if for any $x_0 \in \p \Omega$, there exist $R_0>0$ independent of $x_0$, an orthonormal coordinate system $x=(x',x^d)$ centered at $x_0$, and a Lipschitz function $\psi_0$ such that
$$\Omega \cap B_{R_0}(x_0) = \{x \in B_{R_0}(0) : x^d < \psi_0(x')\},$$
\begin{equation}\label{epsilonlipschitz1}
|\psi_0(x'_1)-\psi_0(x'_2)|<\epsi_0|x'_1-x'_2|.
\end{equation}
Since Lipschitz function is almost everywhere differentiable, \eqref{epsilonlipschitz1} is equivalent to
\begin{equation}\label{epsilonlipschitz2}
|D\psi_0(x')|<\epsi_0 \quad \mbox{a.e. }
\end{equation}
\end{definition}
Notice that this definition is given in a natural choice of coordinate system, where the $x^d$-axis is chosen to be close to the normal direction of $\p\Omega$ at $x_0$.  In the next section, we will give a coordinate system adapted to the oblique derivative boundary condition, which is more convenient to work with.
	
The next part gives basic conditions which will be assumed throughout this paper. For a second-order elliptic equation
$$Lu:=a_{ij}D_{ij}u + a_{i}D_i u + a_0 u = f,$$
we assume
\begin{align}
&a_{ij} \in L_{\infty},\quad a_{ij}=a_{ji},\quad \nu |\xi|^2 \leq a_{ij}\xi_i\xi_j \leq \nu^{-1}|\xi|^2,\label{unifellipticciefficients}\\
&a_i, a_0 \in L_{\infty}(\Omega),\label{boundeda}
\end{align}
where $\nu\in (0,1]$ is a constant.

The following small BMO assumption will be assumed for $a_{ij}$, where $\theta$ is a constant to be specified later.
\begin{assumption}[$r_0,\theta$]\label{smallBMOAss}
For a constant $\theta>0$, there exists some $r_0>0$, such that $$\sup_{x, 0<r<r_0}\dashint_{B_r(x)\cap \Omega}|a_{ij}(y)-(a_{ij})_{B_r(x)\cap\Omega}|\,dy < \theta.$$
\end{assumption}
In this paper, we consider the following oblique derivative boundary condition:
\begin{equation}\label{obliqueboundarycondition}
Bu := b_0 u + b_iD_i u = g\quad \mbox{on } \partial \Omega.
\end{equation}
As usual, this is understood in the sense of trace.
For the coefficients $b_0$ and $b_i$, we assume
\begin{align}
&b_0, b_i \in C^\alpha(\partial \Omega),\quad  \mbox{where } \alpha > 1-\frac{1}{p},\label{bregularity}\\
&b\cdot n \geq \delta|b|\quad \text{a.e.} \quad\text{for some }\delta\in (0,1]. \label{oblique}
\end{align}
Here \eqref{oblique} represents the obliqueness of the vector field $b=(b_i)_{i=1}^d$ and $n$ is the unit outer normal direction. If $\p\Omega$ locally is represented by a Lipschitz function $y^d=\psi(y')$, we can write $n$ in terms of $D\psi$:
\begin{equation}\label{normal}
n=\frac{(D\psi,-1)}{\sqrt{|D\psi|^2+1}}\quad \text{a.e.}
\end{equation}
Now we state our main result.
\begin{theorem}[$W^2_p$ estimate in small Lipschitz domain]\label{maintheorem}
In a Lipschitz domain $\Omega$ (bounded or unbounded), suppose $u \in W^2_p(\Omega)$ solves		
\begin{equation}\label{eq11.52}		
\begin{cases}
Lu = f \quad &\mbox{in } \Omega,\\
Bu = g\quad &\mbox{on } \partial \Omega.
\end{cases}
\end{equation}
Assume assumptions \eqref{unifellipticciefficients}, \eqref{boundeda}, \eqref{bregularity}, and \eqref{oblique} hold, $f\in L_p(\Omega)$, and $g\in W^{1-1/p}_p(\p \Omega)$. There exist $\theta_0=\theta_0(d,p,\nu)>0$ and $\epsi_0=\epsi_0(d,p,\nu, \|a_{i},a_0\|_\infty)>0$ small enough, such that if Assumption \ref{smallBMOAss}$(r_0, \delta\theta_0)$  is satisfied, and the domain $\Omega$ is $\delta^2\epsi_0$-Lipschitz, then we have,
\begin{equation}\label{mainestimate}
\|u\|_{W^2_p(\Omega)} \leq N\big(\|u\|_{L_p(\Omega)} + \|f\|_{L_p(\Omega)} + \|g\|_{W^{1-1/p}_p(\partial\Omega)}\big).
\end{equation}
Here $N=N(d,p,\nu,\|a_{i},a_0\|_\infty,\|b_i,b_0\|_\alpha,r_0,R_0,\delta)$ is a constant.
In particular, the result holds when $\Omega \in C^1$ and $a_{ij}\in \text{VMO}$.
\end{theorem}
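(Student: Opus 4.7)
The plan is to combine localization with a boundary extension built from a regularized distance, and then close the estimate by a perturbation / freezing-coefficients argument.

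First I would reduce to the homogeneous boundary case: by the trace theory in small Lipschitz domains pick $G\in W^2_p(\Omega)$ with $BG=g$ on $\partial\Omega$ and $\|G\|_{W^2_p(\Omega)}\leq N\|g\|_{W^{1-1/p}_p(\partial\Omega)}$, and replace $u$ by $u-G$ so that we may assume $g=0$. Then I would cover $\overline\Omega$ by finitely many interior and boundary balls. In every interior ball \eqref{mainestimate} is a direct consequence of the partially-VMO $W^2_p$ theory from \cite{DongVMOallp}. This reduces everything to an estimate in a single boundary ball $B_{R_0}(x_0)$ in the coordinate system adapted to the oblique direction (see Section \ref{regularizedsection}): rotate so that $b(x_0)$ is parallel to $e_d$, and write the boundary locally as $x^d=\psi(x')$ with $[\psi]_1<\delta^2\epsi_0$; obliqueness $b\cdot n\geq\delta|b|$ guarantees that the rotated $x'$-plane still hosts $\partial\Omega$ as a small-Lipschitz graph.

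Next I would carry out the key extension step. Let $\rho$ be a regularized distance to $\partial\Omega$ as constructed in Section \ref{regularizedsection}. Instead of using the purely Lipschitz flattening $(x',x^d)\mapsto (x',x^d-\psi(x'))$, which is not smooth enough to preserve the structure of $L$, I would define an extension $\tilde u$ of $u$ across $\partial\Omega$ by sliding along the frozen vector field $b(x_0)$: for $x$ in a one-sided tubular neighborhood of $\partial\Omega$ outside $\Omega$, set $\tilde u(x)=u(x-tb(x_0))$ plus a correction of order $\rho(x)$ encoding the homogeneous oblique condition $Bu=0$. Because $b(x_0)\cdot n(x_0)\geq\delta$ and the domain is $\delta^2\epsi_0$-Lipschitz, the extension is well defined and lies in $W^2_p$ on a two-sided neighborhood. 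Differentiating twice and using $Bu=0$ on $\partial\Omega$ yields an equation $L\tilde u=\tilde f$ in the two-sided neighborhood, with $\tilde f$ controlled by $f$, by $u$, and by error terms of the form $u/\rho$ and $Du/\rho^\alpha$; these error terms are tamed by Hardy's inequality applied to $u$ (after subtracting its trace on $\partial\Omega$) and to $Du$.

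With the problem converted to an interior-type problem in a two-sided neighborhood, I would then flatten the boundary via the regularized-distance map. The leading coefficients remain uniformly elliptic and small-BMO modulo an error of size $O([\psi]_1)=O(\delta^2\epsi_0)$, while the lower-order perturbations inherit $C^\alpha$ bounds from $b$ and $\rho$. This places the problem in the setting of the half-space $W^2_p$ theory with partially VMO coefficients of \cite{DongVMOallp,KimKrylov}. Applying that result and absorbing the perturbations with the smallness of $\theta_0$ and of $\delta^2\epsi_0$ yields
\begin{equation*}
\|u\|_{W^2_p(B_{R_0/2}(x_0)\cap\Omega)}\leq N\big(\|f\|_{L_p(\Omega)}+\|g\|_{W^{1-1/p}_p(\partial\Omega)}+\|u\|_{L_p(\Omega)}\big),
\end{equation*}
and summing over the boundary cover together with the interior bound gives \eqref{mainestimate}.

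The main obstacle I anticipate is the extension step itself. Its success hinges on three matching points: writing $\tilde u$ so that it belongs to $W^2_p$ across the merely Lipschitz boundary; producing an equation for $\tilde u$ that, in the new variables, really is close to an equation with partially VMO coefficients; and absorbing the error terms of type $u/\rho$ and $Du/\rho^\alpha$. The smallness of $[\psi]_1$, the $C^\alpha$ regularity of $b$, and Hardy's inequality are exactly what is needed to push these errors into a form absorbable by the left-hand side for suitable $\theta_0=\theta_0(d,p,\nu)$ and $\epsi_0=\epsi_0(d,p,\nu,\|a_i,a_0\|_\infty)$; the obliqueness parameter $\delta$ dictates how tilted the extension direction can be relative to $n$, which explains the scalings $\delta\theta_0$ and $\delta^2\epsi_0$ appearing in the statement.
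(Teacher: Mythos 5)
Your toolkit (regularized distance, flattening, Hardy's inequality, the partially VMO half-space theory, a perturbation at a frozen boundary point) is the same as the paper's, but the two structural steps on which your outline rests are gaps rather than reductions. The opening reduction to $g=0$ is not available from ``trace theory'': for $g\in W^{1-1/p}_p(\partial\Omega)$ one only gets an extension in $W^1_p(\Omega)$, or a $W^2_p$ extension when the Dirichlet trace \emph{and} the normal derivative are both prescribed. Producing $G\in W^2_p(\Omega)$ with $BG=g$ and $\|G\|_{W^2_p(\Omega)}\leq N\|g\|_{W^{1-1/p}_p(\partial\Omega)}$ for the variable-coefficient oblique operator $B$ on a merely (small) Lipschitz boundary is essentially the crux of the whole theorem: it is what the paper's Theorem \ref{ExtensionThm} provides, and even there only locally and only for the frozen constant direction $\partial/\partial y^d$ (via mollification with the regularized distance and the dual Hardy inequality, Lemma \ref{dualhardyLem}); the variable part $(b_i-b_i(x_0))D_iu+b_0u$ is never straightened away but kept as boundary data and absorbed later. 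Citing this reduction as known assumes the hardest part.

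The second gap is the extension of $u$ across $\partial\Omega$ along the frozen direction, $\tilde u(x)=u(x-t\,b(x_0))+O(\rho)$. The homogeneous condition at your disposal is $b(x)\cdot Du+b_0u=0$, whereas the $C^1$-matching your construction needs across the interface is for $b(x_0)\cdot Du$; the discrepancy $(b(x)-b(x_0))\cdot Du+b_0u$ does not vanish on $\partial\Omega$ (it is only of size $r^\alpha|Du|+|u|$ there), so the transversal derivative of $\tilde u$ jumps and $L\tilde u$ contains a surface-measure term: the result is not an interior problem with $\tilde f\in L_p$, and no smallness argument can absorb a distribution supported on $\partial\Omega$. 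Moreover, the Hardy absorption of the $D^2\rho_0$ errors works in the paper only because the specific derivative $\partial\tilde u/\partial z^d$ has zero trace after the reduction to $\partial u/\partial y^d=0$; the relation $Bu=0$ by itself does not give vanishing of any single directional derivative, so your bound on terms like $Du/\rho$ is not justified. The paper's route avoids both problems: lift $h=g-(b_i-b_i(x_0))D_iu-b_0u$ to $v\in W^2_p$ by Theorem \ref{ExtensionThm}, apply the homogeneous model estimate of Lemma \ref{homogeneouslemma} to $u-v$ (flattening by $\rho_0$, Hardy, even reflection into the partially VMO half-space theory), and control the boundary norms of the products via $\|fg\|_{W^{1-1/p}_p}\lesssim\|f\|_{C^\alpha}\|g\|_{L_p}+\|f\|_{L_\infty}\|g\|_{W^{1-1/p}_p}$ (this is where $\alpha>1-1/p$ enters), which leaves a term $Nr^\alpha\|u\|_{W^2_p(\Omega_{4r})}$ that is absorbed by taking $r$ small in the covering argument, as in \eqref{cover}. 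These bookkeeping and absorption steps are exactly what your outline omits, and without them the estimate \eqref{mainestimate} does not close.
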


As an application we have the corresponding solvability result. For this, we also need the following conditions:
\begin{align}
&a_0\leq 0, \quad b_0 \ge 0\quad \text{a.e.} \label{sign}\\
&a_0\not\equiv0 \mbox{ or } b_0\not\equiv0,\label{notalwayszero}\\
&\alpha>\max\{1-1/d,1-1/p\},\label{alpha>d}
\end{align}	
where $\alpha$ is the H\"{o}lder exponent of $b$.
	
\begin{theorem}\label{well-posednessThm}
Besides assumptions in Theorem \ref{maintheorem} regarding $\Omega$, $L$, and $B$, we also assume conditions \eqref{sign}-\eqref{alpha>d} to hold and $\Omega$ to be bounded. Then there exist $\theta_0=\theta_0(d,p,\nu)>0$ and $\epsi_0=\epsi_0(d,p,\nu, \|a_{i},a_0\|_\infty)>0$ small enough, such that, if Assumption \ref{smallBMOAss} $(r_0, \delta\theta_0)$ is satisfied, and the domain $\Omega$ is $\delta^2\epsi_0$-Lipschitz, we have that for any $f\in L_p(\Omega), g\in W^{1-1/p}_p(\Omega)$, there exists a unique solution $u\in W^2_p(\Omega)$ of \eqref{eq11.52} with
\begin{equation}\label{nodependenceapriori}
\|u\|_{W^2_p(\Omega)}\leq N(\|f\|_{L_p(\Omega)}+\|g\|_{W^{1-1/p}_p(\partial \Omega)}).
\end{equation}
Here $N$ is a constant independent of $u$.
\end{theorem}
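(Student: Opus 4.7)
My plan is to combine the a priori estimate of Theorem \ref{maintheorem}, a compactness-contradiction argument driven by a maximum principle, and an approximation of $\Omega$ by smooth subdomains. Uniqueness will follow from the improved estimate \eqref{nodependenceapriori} applied to the difference of two solutions, so the whole argument reduces to proving \eqref{nodependenceapriori} and producing at least one $W^2_p$ solution.

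To upgrade \eqref{mainestimate} to \eqref{nodependenceapriori}, I would argue by contradiction. If the improved estimate fails, there is a sequence $u_n \in W^2_p(\Omega)$ with $\|u_n\|_{W^2_p(\Omega)} = 1$ and $\|Lu_n\|_{L_p(\Omega)} + \|Bu_n\|_{W^{1-1/p}_p(\partial\Omega)} \to 0$. Theorem \ref{maintheorem} then forces $\|u_n\|_{L_p(\Omega)} \ge c_0 > 0$. Since $\Omega$ is a bounded Lipschitz domain, Rellich-Kondrachov gives a subsequence with $u_n \to u$ strongly in $W^1_p(\Omega)$, $u_n \rightharpoonup u$ weakly in $W^2_p(\Omega)$, and traces converging in $L_p(\partial\Omega)$. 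Passing to the limit yields $Lu = 0$ in $\Omega$, $Bu = 0$ on $\partial\Omega$, and $\|u\|_{L_p(\Omega)} \ge c_0$. Under the sign conditions \eqref{sign}, the non-triviality \eqref{notalwayszero}, and the H\"older exponent condition \eqref{alpha>d}, an Aleksandrov-Bakelman-Pucci type maximum principle for strong solutions of oblique derivative problems (in the spirit of the Lieberman-Safonov work cited in the introduction) forces $u \equiv 0$, contradicting $\|u\|_{L_p(\Omega)} \ge c_0$.

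For existence, I would approximate $\Omega$ by a sequence of $C^{1,\alpha}$ domains $\Omega_k$ whose local defining graphs inherit a uniform (still $\delta^2\epsi_0$-small) Lipschitz bound, so that Theorem \ref{maintheorem} and Step~1 apply in each $\Omega_k$ with a constant independent of $k$. In each smooth $\Omega_k$, the classical theory of Lieberman \cite{LiebermanBook} (whose hypothesis $\alpha > 1 - 1/p$ is covered by \eqref{bregularity}) yields a $W^2_p$ solution $u_k$ of the transplanted problem, with right-hand sides given by $f$ and a suitable extension of $g$. By the improved, domain-uniform estimate \eqref{nodependenceapriori} from Step~1, the sequence $\{u_k\}$ is uniformly bounded in $W^2_p$. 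Weak $W^2_p$ compactness together with strong $W^1_p$ convergence and the trace theorem on the converging boundaries produces a limit $u \in W^2_p(\Omega)$ satisfying $Lu = f$ in $\Omega$ and $Bu = g$ on $\partial\Omega$, with the required estimate preserved in the limit.

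The main technical obstacle is the maximum-principle step underlying Step~1: in a Lipschitz domain with merely H\"older continuous oblique direction $b$, the ABP-type bound that is needed to rule out a nontrivial homogeneous solution requires precisely the sharper regularity $\alpha > 1 - 1/d$, which is the origin of assumption \eqref{alpha>d}; the sign conditions \eqref{sign}-\eqref{notalwayszero} are also crucial here, as otherwise constants (or eigenfunctions) would obstruct uniqueness. A secondary technicality is to ensure that the approximating domains $\Omega_k$ really share a uniformly small Lipschitz constant and that their boundaries converge in a sense strong enough to pass to the limit in the oblique boundary condition.
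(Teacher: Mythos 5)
Your overall architecture (compactness to remove $\|u\|_{L_p}$, an ABP-type uniqueness result, approximation of $\Omega$ by smooth domains) is close to the paper's, but there are two genuine gaps, both traceable to the fact that you never introduce the large-$\lambda$ theory (Lemma \ref{removeuLem} and Corollary \ref{largelambdaexistenceCor}), which is the workhorse of the paper's proof. First, in your compactness step you apply the ABP-type maximum principle of \cite{MR906819} directly to the limit $u\in W^2_p(\Omega)$ of the contradiction sequence. That result is only applicable when the solution has enough Sobolev regularity (essentially $W^2_q$ with $q>d$ and $1-1/q<\alpha$); for $p\leq d$ a $W^2_p$ solution of the homogeneous problem is not covered, and \eqref{alpha>d} alone does not fix this. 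The paper bridges exactly this gap by first proving the coercive estimate \eqref{removezeroorder} for $L-\lambda$ with $\lambda$ large (via the auxiliary variable and the factor $\cos(\sqrt{\lambda}x^{n+1})$) together with large-$\lambda$ solvability, and then bootstrapping: rewrite $Lu=0$ as $(L-\lambda)u=-\lambda u$, use Sobolev embedding and $W^2_q$ well-posedness at large $\lambda$ to climb from $W^2_p$ to $W^2_{d+\epsi}$ with $1-1/(d+\epsi)<\alpha$, and only then invoke \cite{MR906819}. Without some substitute for this regularity-improvement step your uniqueness claim \eqref{uniqueness}, and hence your Step~1, is unproven for $p\leq d$.

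Second, your existence step relies on the estimate \eqref{nodependenceapriori} holding on each approximating domain $\Omega_k$ ``with a constant independent of $k$.'' But your Step~1 produces that estimate by a contradiction/compactness argument on a fixed domain; such an argument gives no quantitative control of the constant, so there is no reason it is uniform over the family $\{\Omega_k\}$ (a uniform version would require a compactness argument in the space of domains, which you do not supply). The paper avoids this by using the quantitative estimate \eqref{removezeroorder}, whose constant depends on $\Omega$ only through the small Lipschitz data, to get uniform $W^2_p$ bounds for the approximating solutions at \emph{large} $\lambda$, then passes to the limit to obtain solvability of \eqref{largelambda} on $\Omega$ itself (checking the boundary condition via the weak closedness of $\mathring{W}^1_p$), and finally descends from $\lambda=\lambda_0$ to $\lambda=0$ by the method of continuity on the fixed rough domain, using the uniform-in-$\lambda$ a priori estimate \eqref{apriori} obtained from \eqref{nodependenceapriori} by a compactness argument in $\lambda\in[0,\lambda_0]$. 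A further, smaller, issue with solving the $\lambda=0$ problem directly in $\Omega_k$ is that the nondegeneracy condition \eqref{notalwayszero} and the obliqueness of the transplanted $b$ must be verified on $\partial\Omega_k$; this is routine but needs to be said, whereas at large $\lambda$ the zeroth-order term makes the problem coercive and no such hypothesis is needed. To repair your proof along the paper's lines, you should prove the $L-\lambda$ estimate and large-$\lambda$ solvability first, and let both the uniqueness bootstrap and the existence argument run through them.
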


\begin{remark}\label{bregularityexample}
Our perturbation argument still works if the regularity assumption \eqref{bregularity} is replaced by the following: for $1\leq i \leq d$,
\begin{equation}
b_i \in
\begin{cases}\label{bSobolev}
W^1_p(\Omega), \text{ if } p>d,\\
W^1_q(\Omega)\text{ for some } q>d, \text{ if } p=d,\\
W^1_d(\Omega), \text{ if } p<d,
\end{cases}
\end{equation}
and
\begin{equation}\label{bSobolev2}
b_0 \in
\begin{cases}
W^1_p(\Omega), \text{ if } p>d/2,\\
W^1_q(\Omega)\text{ for some } q>d/2, \text{ if } p=d/2,\\
W^1_{d/2}(\Omega), \text{ if } p<d/2.
\end{cases}
\end{equation}
Here $d \geq 2$ is the space dimension.

The following example in $\mathbb{R}^2$ shows that the regularity assumption \eqref{bSobolev}-\eqref{bSobolev2} is sharp. Consider $\Omega=\{(x,y): x>|y|^{1+\epsi}\}$, $u=(x|y|^\beta+y)\eta_R$, where $\beta \in (0,1), \epsi>0$ are some constants to be determined later, $\eta$ is some smooth cutoff function supported in a ball $B_R$, and equals to $1$ in $B_{R/2}$. Direct calculation shows that if we choose $$\max\{1/2-(2+\epsi)/p, 1-\epsi + (2+\epsi)/p\}<\beta \leq 1-(2+\epsi)/p,$$
and
$$
b=(-1,|y|^\beta),
$$
then,
$$
u\in L_p(\Omega),\quad \Delta u \in L_p(\Omega),\quad  Bu=b\cdot Du \in W^1_p(\Omega),\quad  D_{12}u \notin L_p(\Omega).
$$
Hence \eqref{mainestimate} cannot be true in this case. Notice that \eqref{bSobolev} is violated since we only have $b\in W^1_q$ for $q<2/(1-\beta)< p$.
\end{remark}
\begin{remark}
To see the importance of the small Lipschitz condition, we give the following example which is also in $\mathbb{R}^2$. We use the polar coordinates $(r,\theta)$.	Let $\theta_0 \in (\pi/2,\pi)$ be a fixed angle. Consider the wedge domain
$$
\Omega^{\theta_0}=\{(r,\theta): -\theta_0<\theta<\theta_0\}\quad \text{and}\quad \Gamma^{\theta_0}=\{\theta=-\theta_0, \theta_0\}.$$
Define $u={\rm Im}\,\{e^{i(\alpha_0-1)\theta_0}z^{\alpha_0}\}\eta_R$, where $z=re^{i\theta}$, $\alpha_0$ is some constant to be determined later, and $\eta_R$ is the cutoff function in Remark \ref{bregularityexample}. Noting that the opposite of the $x$-direction is oblique on $\Gamma^{\theta_0}$, direct computation shows that on $\Gamma^{\theta_0} \cap B_{R/2}$,
$$\frac{\p u}{\p x} = \alpha_0 r^{\alpha_0-1}\sin\big((\alpha_0-1)(\theta_0+\theta)\big).$$
Hence, if we choose $b=(-1,0)$ and $\alpha_0=\pi/(2\theta_0)+1 \in (3/2,2)$, we have $Bu = 0$ on $\Gamma^{\theta_0} \cap B_{R/2}$. Now $u \in L_p$ for all $p$, $\Delta u = 0$, but for $p\geq \frac{2}{2-\alpha_0}$,
$$r^{\alpha_0-2} \lesssim|D^2u| \notin L_p.$$
\end{remark}

\section{Cylindrical Neighborhood and Regularized Distance}\label{regularizedsection}
In this paper, local properties near the boundary will be intensively studied. Rather than the coordinate system coming with Definition \ref{Defsmalllipschitzdomain}, it is more convenient to use the following coordinates $y=(y',y^d)$ which depend on the boundary condition \eqref{obliqueboundarycondition}. Also, it is more convenient to work with the following ``cylindrical'' neighborhood rather than the ``half ball'' neighborhood in Definition \ref{Defsmalllipschitzdomain}.

Consider an $\epsi_0$-Lipschitz domain $\Omega$, $x_0 \in \partial \Omega$ where $D\psi_0$ exists, and a vector field $b$ which satisfies \eqref{oblique} at $x_0$. We first take a rotation of the coordinates in Definition \ref{Defsmalllipschitzdomain} to make $y^d$-axis lie in $b(x_0)$ direction. By \eqref{oblique}, $b(x_0)$ is non-tangential at the point $x_0$. Taking \eqref{epsilonlipschitz2} into account, locally $\p\Omega$ is still a graph :$y^d = \psi(y')$. Here $\psi$ can be obtained from $\psi_0$ by the implicit function theorem. The small Lipschitz condition \eqref{epsilonlipschitz2} now can be written in terms of $(y',y^d)$ and $\psi$ as the smallness of the oscillation of $D\psi$. Notice that due to the rotation we will introduce a constant factor $1/\delta^2$ in front. To be specific, in Theorem \ref{maintheorem} we assume $\Omega$ to be $\delta^2\epsi_0$-Lipschitz, i.e. $$|D\psi_0(x')|<\delta^2\epsi_0 \quad \mbox{a.e. } x' \in B'_{R_0}.$$
Straightforward computation gives us, if $\epsi_0<1/8$, in the new coordinates
\begin{equation}\label{epsilonlipschitz}
|D\psi(y')-D\psi(z')|<3\epsi_0\quad \mbox{a.e. } y', z' \in B'_{\delta R_0}.
\end{equation}
Now due to the expression of $n$ in terms of $D\psi$ \eqref{normal}, the obliqueness condition \eqref{oblique} at the point $x_0=(0,\psi(0))$ can be written as:
$$|D\psi(0)|\leq \sqrt{1/\delta^2 -1}.$$
Choosing $\epsi_0<1/3$, we have
\begin{equation}\label{Dpsibounded}|D\psi|<2/\delta\quad \text{a.e. in } B'_{\delta R_0}.\end{equation}
We further shift the $y'$-coordinate plane so that $x_0=(0,\frac{3}{\delta}R)$. Here $R<\delta R_0$ is a radius parameter to be chosen small later. Due to \eqref{Dpsibounded}, we have
\begin{equation}\label{nottoosteep}
\forall y'\in B'_R(0), \quad  \frac{1}{\delta}R<\psi(y')< \frac{5}{\delta}R.
\end{equation}
The following is the ``cylindrical'' neighborhoods in which the local properties will be studied:
$$\Omega_R(x_0) := \{(y',y^d): y'\in B'_R(0), 0<y^d<\psi(y')\},$$
$$\Gamma_R(x_0) := \{(y',y^d): y'\in B'_R(0), y^d=\psi(y')\}.$$
$$Q_R(x_0) := \{(y',y^d): y'\in B'_R(0), 0<y^d<\frac{6}{\delta}R\},$$
The center $x_0$ will be omitted when there is no ambiguity.

The second part of this section is a useful tool for rough boundaries (say, worse than $C^2$). This is the regularized distance introduced by Lieberman in \cite{LiebermanRegularizedDistance}. For our problem, we modify Theorem 2.1 in \cite{LiebermanRegularizedDistance} to adapt to small Lipschitz domains.
\begin{theorem} [Local regularized distance for small Lipschitz domain]\label{RegularizedDistanceThm}
Let $\Omega$ be a bounded Lipschitz domain with local representation $\psi$ satisfying \eqref{epsilonlipschitz} and \eqref{Dpsibounded} on $\Omega_{4R}(x_0)$, $x_0 \in \partial \Omega$. Then there exists $\rho_0 \in C^\infty(Q_R\setminus \Gamma_R)\cap C^{0,1}(Q_R)$, such that:
\begin{align}
&\text{there exists a constant } M>0,\,\, \abs{D\rho_0}<M,\,\,  M^{-1}<\frac{\rho_0}{d_y}<M \mbox{ in } Q_R\setminus \Gamma_R,\label{distance}\\
&\mbox{where } d_y = \begin{cases}\text{dist}(y,\partial \Omega)\quad y\in\Omega_R\\-\text{dist}(y,\partial\Omega)\quad y\in Q_R\cap(\overline{\Omega_R})^c\end{cases},\nonumber\\
&|D\rho_0(y)-D\rho_0(z)|\leq 12\epsi_0 \quad\mbox{in } Q_R,\label{smalllip}\\
&|D^2\rho_0(x)|\leq N\frac{\epsi_0}{|\rho_0(x)|} \quad \mbox{in } Q_R\setminus\Gamma_R.\label{secondorderderivative}
\end{align}
Here $N>0$ is an absolute constant, and $M=M(\delta)$.
	
\end{theorem}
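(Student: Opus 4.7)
The plan is to follow the construction of Lieberman \cite{LiebermanRegularizedDistance}, localized and tuned to exploit the small oscillation \eqref{epsilonlipschitz}. Fix a standard radial mollifier $\phi\in C^\infty_c(B'_1)$ with $\int\phi=1$ and, for $t>0$, form the partial mollification
$$\Psi(y',t)=\int_{B'_1}\psi(y'-tz')\phi(z')\,dz'.$$
By \eqref{Dpsibounded}, $\Psi$ is smooth for $t>0$ and $2/\delta$-Lipschitz in $y'$ uniformly in $t$, with $\partial_t\Psi$ bounded by $2/\delta$. I would then define $\rho_0(y)$ implicitly in $Q_R$ as the unique scalar such that
$$y^d=\Psi(y',\tau\abs{\rho_0(y)})-\rho_0(y),$$
where $\tau>0$ is a small constant depending only on $\delta$ (chosen so that the right-hand side is strictly decreasing in $\rho_0$, guaranteeing existence and uniqueness). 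This makes $\rho_0$ positive in $\Omega_R$, negative in $Q_R\setminus\overline{\Omega_R}$, vanishes exactly on $\Gamma_R$, and is $C^\infty$ off $\Gamma_R$ by the implicit function theorem.

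Next I would verify the three quantitative claims. For \eqref{distance}: since $\abs{\Psi(y',\tau\abs{\rho_0})-\psi(y')}\leq(2/\delta)\tau\abs{\rho_0}$, the implicit equation gives $\abs{\rho_0(y)}\sim\abs{\psi(y')-y^d}$ with constants depending on $\delta$; combined with the elementary comparison $\abs{\psi(y')-y^d}\sim d_y$ (again $\delta$-dependent because of the tilted graph and \eqref{nottoosteep}), this yields \eqref{distance} with $M=M(\delta)$. For \eqref{smalllip}: differentiating the implicit relation expresses $D\rho_0$ as a rational function of $D_{y'}\Psi$ and $\partial_t\Psi$, and the oscillation of $D\rho_0$ is controlled by that of $D_{y'}\Psi(\cdot,t)$, which in turn is controlled by the oscillation of $D\psi$ after averaging; using \eqref{epsilonlipschitz} and bookkeeping of the bounded prefactors, one obtains the bound $12\epsi_0$.

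The main subtlety, and the only estimate where the factor $\epsi_0$ must be extracted cleanly, is \eqref{secondorderderivative}. When $D^2\rho_0$ is computed from the implicit equation, the dangerous term is $D^2_{y'}\Psi(y',\tau\abs{\rho_0})$, which a priori grows like $1/t$ as $t\downarrow 0$. The key observation is that one derivative can be transferred from $\psi$ onto $\phi$ via
$$D^2_{y'}\Psi(y',t)=\frac{1}{t}\int \bigl(D\psi(y'-tz')-(D\psi)_{B'_t(y')}\bigr)\otimes D\phi(z')\,dz',$$
using $\int D\phi=0$; then \eqref{epsilonlipschitz} produces the pointwise bound $\abs{D^2\Psi(y',t)}\leq N\epsi_0/t$, and the implicit-function computation converts this into \eqref{secondorderderivative}. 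This is the step I expect to be the most delicate, since the $\epsi_0$-gain is exactly what drives the perturbation argument in Theorem \ref{maintheorem}; the remaining computations are standard modifications of \cite[Theorem 2.1]{LiebermanRegularizedDistance}, adapted to the cylindrical neighborhood and to the $\delta$-dependent tilted coordinates from Section \ref{regularizedsection}.
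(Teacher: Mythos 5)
Your construction is essentially the paper's (and Lieberman's \cite{LiebermanRegularizedDistance}): mollify the vertical distance at a scale proportional to the regularized distance itself, define $\rho_0$ implicitly, get smoothness off $\Gamma_R$ from the implicit function theorem, and gain the factor $\epsi_0$ in \eqref{secondorderderivative} by throwing one derivative onto the mollifier and using $\int D\phi=0$ together with \eqref{epsilonlipschitz} --- that last step is exactly the paper's computation. However, there is a genuine gap in your treatment of \eqref{smalllip}, caused by the absolute value in the implicit equation $y^d=\Psi(y',\tau\abs{\rho_0})-\rho_0$. Implicit differentiation gives
$$D_{y'}\rho_0=\frac{D_{y'}\Psi\big(y',\tau\abs{\rho_0}\big)}{1-\tau\,\mathrm{sgn}(\rho_0)\,\partial_t\Psi\big(y',\tau\abs{\rho_0}\big)},\qquad D_{y^d}\rho_0=\frac{-1}{1-\tau\,\mathrm{sgn}(\rho_0)\,\partial_t\Psi},$$
so the denominator changes discontinuously across $\Gamma_R$ by $2\tau\abs{\partial_t\Psi}$. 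You only record the crude bound $\abs{\partial_t\Psi}\le 2/\delta$, and in these tilted coordinates $\abs{D\psi}$ (hence $\abs{D_{y'}\Psi}$) can genuinely be of size $2/\delta$ --- only its \emph{oscillation} is small by \eqref{epsilonlipschitz}, cf.\ \eqref{Dpsibounded}. With those bounds the jump of $D\rho_0$ across $\Gamma_R$ is of order $\tau/\delta^2$, not $O(\epsi_0)$, so your claim that ``the oscillation of $D\rho_0$ is controlled by that of $D_{y'}\Psi$'' does not yield \eqref{smalllip}, and it also jeopardizes the continuity of $D\rho_0$ across $\Gamma_R$ that the later change of variables in Lemma \ref{homogeneouslemma} relies on.

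The gap is fixable in either of two ways. (i) Drop the absolute value and use the signed $\rho_0$ as the mollification parameter, i.e.\ mollify $g(y)=\psi(y')-y^d$ at scale $\rho/M$ as in the paper's appendix; then $D_i\rho_0=G_i/(1-G_{d+1})$ where both $G_i$ and $G_{d+1}$ are averages of $D\psi$ against bounded weights, so their total oscillation over $Q_R$ (across the zero level included) is at most $3\epsi_0$, resp.\ $3\epsi_0/M$, which gives the clean $12\epsi_0$. (ii) Keep $\abs{\rho_0}$ but exploit the evenness of your radial mollifier: since $\int z'\phi(z')\,dz'=0$, one has $\partial_t\Psi(y',t)=-\int z'\cdot\big(D\psi(y'-tz')-D\psi(y')\big)\phi(z')\,dz'$, hence $\abs{\partial_t\Psi}\le 3\epsi_0$, and then the sign flip costs only $O(\tau\epsi_0/\delta)=O(\epsi_0)$ for $\tau\lesssim\delta$. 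Either repair must be stated; as written the argument for \eqref{smalllip} fails at $\Gamma_R$. A minor additional point: the implicit second-derivative computation for \eqref{secondorderderivative} also produces $\partial_tD_{y'}\Psi$ and $\partial_t^2\Psi$ terms, which need the same mean-zero cancellations ($\int D\phi=0$, $\int\mathrm{div}(z'\phi)=0$); this is in line with the paper's ``the others are similar,'' but should be acknowledged.
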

\begin{proof}
We follow the steps in \cite{LiebermanRegularizedDistance} for proving Theorems 1.1, 1.3, and 2.1. A sketch of the proof can be found in the appendix.
\end{proof}
In the above theorem, \eqref{distance} means $\rho_0$ is a local distance function. ``Regularized'' refers to the fact that this distance is $C^\infty$ in the interior. The expression \eqref{smalllip} is the small Lipschitz condition for $\rho_0$. The $\epsi_0$ in \eqref{secondorderderivative} is important in our proof.

Regularized distance can work as a suitable function to flatten the boundary, since it is smooth in the interior with a suitable growth rate of higher order derivatives near the boundary. Besides, one could also use it for mollification: $$\widetilde{g} (x) =: \int_{\Omega_R} g(x-\frac{\rho_0(x)}{M_1}y)\phi(y)\,dy.$$ The advantage is: besides $\|\widetilde{g}\|_{W^1_p} \lesssim \|g\|_{W^1_p}$, we have also nice control of $D^2\widetilde{g}$. Details will be given in Lemma \ref{YoungLem} and in the proof of Theorem \ref{ExtensionThm}.

\section{Proof of Theorem \ref{maintheorem}}\label{mainstepssection}
	
Now we are going to prove Theorem \ref{maintheorem}. Our proof is divided into three steps.  First we deal with the following model problem with a simple boundary condition.
\begin{lemma} \label{homogeneouslemma}
Consider a cylindrical neighborhood together with its top boundary $\Omega_R, \Gamma_R$, and the representation function $\psi$ as in Theorem \ref{RegularizedDistanceThm}. Here we take $R<\frac{\delta R_0}{4}$. Assume $u\in W^2_p(\Omega_R)$ solves $$\begin{cases}Lu = f \quad \mbox{in } \Omega_R,\\ \frac{\partial u}{\partial y^d} = 0 \quad \mbox{on } \Gamma_R. \end{cases}$$
There exist $\theta_0=\theta_0(d,p,\nu)>0$ and $\epsi_0=\epsi_0(d,p,\nu, \|a_{i},a_0\|_\infty)>0$ small enough, such that if Assumption \ref{smallBMOAss} holds with $(r_0, \delta\theta_0)$, and \eqref{epsilonlipschitz} holds with $3\epsi_0$, then for any $r<R$, we have
	
\begin{equation*}
\|D^2 u\|_{p,\Omega_{r/2}} \leq N(r^{-2}\|u\|_{p,\Omega_{r}} + \|f\|_{p,\Omega_{r}}).
\end{equation*}
Here $N=N(d,p,\nu,\|a_i,a_0\|_\infty, r_0,\delta)$ is a constant.
\end{lemma}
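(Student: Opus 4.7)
The plan is to flatten the curved top boundary $\Gamma_R$ using the regularized distance $\rho_0$ from Theorem~\ref{RegularizedDistanceThm}, reduce to a Neumann problem in a flat half-cylinder via even reflection, and then invoke the interior $W^2_p$ estimate for equations with partially small BMO coefficients from \cite{KimKrylov,DongVMOallp}. Hardy's inequality will then absorb the singular first-order perturbation produced by the change of variables.

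Concretely, set $\Phi(y) := (y', -\rho_0(y))$, which by \eqref{distance} is bi-Lipschitz from $\Omega_R$ onto a subset of $\{z^d < 0\}$, smooth in the interior of $\Omega_R$, and sends $\Gamma_R$ onto $\{z^d = 0\}$ with Jacobian uniformly non-degenerate (since $\rho_0$ is a signed-distance surrogate and $e_d$ is non-tangential by \eqref{Dpsibounded}). Setting $\tilde u := u \circ \Phi^{-1}$ and pushing the equation through $\Phi$, one obtains
\[
\tilde a_{kl}(z) D_{z^k z^l}\tilde u + \tilde b(z) D_{z^d}\tilde u + (\text{bounded first- and zeroth-order terms}) = \tilde f,
\]
with $\tilde a_{kl}(z) = \bigl[(D_{y^i}\Phi^k)(D_{y^j}\Phi^l) a_{ij}\bigr]\bigl(\Phi^{-1}(z)\bigr)$ and $\tilde b(z) = -\bigl[a_{ij} D_{y^i y^j}\rho_0\bigr]\bigl(\Phi^{-1}(z)\bigr)$; the boundary condition becomes the Neumann condition $D_{z^d}\tilde u = 0$ on $\{z^d = 0\}$. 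By \eqref{smalllip}, $D\Phi$ is a perturbation of a constant matrix of size $O(\epsi_0)$, so $\tilde a_{kl}$ inherits a small BMO property from Assumption~\ref{smallBMOAss} with parameter $\lesssim \delta\theta_0 + \epsi_0$, while \eqref{secondorderderivative} yields the singular bound $|\tilde b(z)| \le N\epsi_0/|z^d|$.

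Now extend $\tilde u$ evenly across $\{z^d = 0\}$; the Neumann condition makes the extension lie in $W^2_p$ of a full cylinder containing $\Phi(\Omega_{r/2})$ and its reflection, and the reflected $\tilde a_{kl}$ remain in partially small BMO (small BMO in $z'$, merely measurable in $z^d$). The interior $W^2_p$ estimate of \cite{KimKrylov,DongVMOallp} on this cylinder, with a cutoff supported in a cylinder of radius $r$, yields $\|D^2_z \tilde u\|_{L_p} \le N\bigl(\|\tilde a_{kl} D_{z^k z^l}\tilde u\|_{L_p} + r^{-2}\|\tilde u\|_{L_p}\bigr)$. The singular term is then controlled by Hardy's inequality: since $D_{z^d}\tilde u|_{z^d=0} = 0$, one has $\|D_{z^d}\tilde u/z^d\|_{L_p} \le N\|D_{z^d z^d}\tilde u\|_{L_p}$, so $\|\tilde b D_{z^d}\tilde u\|_{L_p} \le N\epsi_0\|D^2_z\tilde u\|_{L_p}$, absorbable for small $\epsi_0$. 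Standard interpolation absorbs the bounded lower-order $a_i D_{z^i}\tilde u$ and $a_0 \tilde u$ terms; the dependence of $\epsi_0$ on $\|a_i,a_0\|_\infty$ enters at this step. Transforming back via $\Phi^{-1}$ relates $D^2_y u$ to $D^2_z \tilde u$ plus an additional $D_{z^d}\tilde u \cdot D^2_y\rho_0$ term that is again controlled by Hardy.

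The main obstacle, and the novelty over \cite{LiebermanBook}, is that the regularized-distance change of variables introduces a \emph{singular} first-order coefficient $|\tilde b| \lesssim \epsi_0/|z^d|$, which would ordinarily preclude a perturbative absorption. Hardy's inequality resolves this precisely because the boundary condition is Neumann, so $D_{z^d}\tilde u$ vanishes on $\{z^d = 0\}$. A companion subtlety is that the small BMO parameter of $\tilde a_{kl}$ is preserved under composition with $\Phi$ only because \eqref{smalllip} makes $D\Phi$ a small perturbation of a constant matrix; this is where the smallness of $\epsi_0$ in the Lipschitz hypothesis is essential.
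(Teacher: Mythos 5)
Your proposal follows essentially the same route as the paper's proof: flatten $\Gamma_R$ via the change of variables built from the regularized distance, note that \eqref{smalllip} preserves the small (partially) BMO condition while \eqref{secondorderderivative} produces a singular term of size $\epsi_0/|z^d|$ multiplying $D_{z^d}\tilde u$, control it by Hardy's inequality using the Neumann condition, invoke the half-space estimate via even reflection and \cite{KimKrylov,DongVMOallp}, and transform back with Hardy once more. The one point where you are looser than the paper is the absorption of $N\epsi_0\|D^2_z\tilde u\|_{L_p}$: after localization this term sits on a larger cylinder than the left-hand side, so it cannot be absorbed directly; the paper resolves this with an iteration over nested radii $r_k=r-2^{-k-1}r$, a standard step your argument would also need to include.
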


\begin{proof}
In $\Omega_R$, we flatten the boundary using the regularized distance in Theorem \ref{RegularizedDistanceThm}. In other words, we take the change of variables $z=\Phi(y)$: $z'=y'$, $z^d=\rho_0(y)$. This maps curved boundary $\Gamma_R$ to a flat portion of $\{z^d=0\}$.

Write $\widetilde{u}(z) = u(y(z))$. In the $z$ variables, the equation can be written as
\begin{equation*}
\begin{cases}
\widetilde{a_{ij}}D^z_{ij} \widetilde{u} + \widetilde{a_i}D^z_{i}\widetilde{u} + \widetilde{a_0} \widetilde{u} = \widetilde{f} \quad &\mbox{in } \Phi(\Omega_R)\subset \mathbb{R}^d_{+},\\
\frac{\partial \widetilde{u}}{\partial z^d} = 0 \quad &\mbox{on } \Phi(\Gamma_R) \subset \{z^d = 0\},
\end{cases}
\end{equation*}
where \begin{equation*}
\widetilde{f} = f - a_{ij}D_{ij}\rho_0 \frac{\partial \widetilde{u}}{\partial z^d},
\end{equation*}
and $\widetilde{a_i}(z) = a_{k}(y) \frac{\partial z^i}{\partial y^k},  \widetilde{a_0}(z)=a_0(y) \in L_\infty$. Noting $D\rho_0$ has small $L_\infty$-oscillation \eqref{smalllip}, we can choose $\epsi_0$ and $\theta_0$ small enough, to make $\widetilde{a_{ij}} = a_{kl} \frac{\partial z^i}{\partial y^k}\frac{\partial z^j}{\partial y^l}$ have as small BMO semi-norm as we want.

Now we apply the $W^2_p$ estimate for second-order elliptic equations with small BMO coefficient in half space and zero Neumann boundary condition. For this, first one could find in \cite{DongVMOallp} such result in $\mathbb{R}^d$. To deal with the Neumann boundary condition, we just take even extension for $u$ and $f$, and correspondingly for $\widetilde{a_{ij}}$ as in \cite{KimKrylov}. Noting that the extended equation has small partially BMO coefficients, this gives us the global $W^2_p$ estimate for small BMO coefficient in half space with zero Neumann data. One last thing to mention is that here the small BMO assumption is given in terms of the $z$ variables, when translating back to the $y$ variables we will have a $\delta$ factor in front due to the stretching in the change of variables that map balls to ellipses.

Localizing and using a dilation argument, we have for any $t/2\leq s<t\leq R$:
\begin{equation}\label{standNeumanFlat}
\|D^2_z\widetilde{u}\|_{p, s} \leq N \big((t-s)^{-2}\|\widetilde{u}\|_{p, t} + \|\widetilde{f}\|_{p, t}\big),
\end{equation}
where $N=N(d,p,\nu,\|a_{i},a_0\|_\infty,r_0)$. 	Here we used the abbreviation $\|\cdot\|_{p,s}:=\|\cdot\|_{L_p(\Phi(\Omega_s))}$.

We are left to estimate $\|\widetilde{f}\|_{L_p}$. For this, we use the property of regularized distance  \eqref{secondorderderivative}:
$$|D^2\rho_0(x)|\leq N\frac{\epsi_0}{|\rho_0(x)|}.$$
Combining this and Hardy's inequality, we obtain
\begin{equation}\label{hijestimate}
\big\|D_{ij}\rho_0 \frac{\partial \widetilde{u}}{\partial z^d}\big\|_p \leq N\epsi_0\big\|\frac{1}{\rho_0}\frac{\partial \widetilde{u}}{\partial z^d}\big\|_p \leq N\epsi_0\big\|\frac{1}{z^d}\frac{\partial \widetilde{u}}{\partial z^d}\big\|_p \leq N\epsi_0\|D^2_z \widetilde{u}\|_p.
\end{equation}
Here we used $z^d \lesssim  \text{dist}(y,\Gamma_R) \lesssim \rho_0$. Substituting into \eqref{standNeumanFlat}, we get
\begin{equation}\label{beforeiteration}
\|D^2_z\widetilde{u}\|_{p, s} \leq N \big((t-s)^{-2}\|\widetilde{u}\|_{p,t} + \|f\|_{p,t}\big) + N\epsi_0\|a_{ij}\|_\infty\|D^2_z \widetilde{u}\|_{p,t}.
\end{equation}
Choosing $\epsi_0$ small enough, such that $N\epsi_0\|a_{ij}\|_\infty<\frac{1}{5}$, we can use iteration argument to absorb $\|D^2_z\widetilde{u}\|_p$. Indeed, consider a sequence of balls $\{B_{r_k}: r_k = r - 2^{-k-1}r, k=0,1,\ldots\}$. Using \eqref{beforeiteration} with $s=r_k, t=r_{k+1}$ and summing in $k$, we have
\begin{equation*}
\sum_{k=0}^{\infty} 5^{-k}\|D^2_z\widetilde{u}\|_{p,r_k}\leq \sum_{k=0}^\infty \big(N\cdot 4^{k+2}\cdot 5^{-k} r^{-2}\|\widetilde{u}\|_{p,r} + 5^{-k-1}\|D^2_z\widetilde{u}\|_{p,r_{k+1}}\big).
\end{equation*}
This gives us:
\begin{equation*}
\|D^2_z\widetilde{u}\|_{p,\Phi(\Omega_{r/2})} \leq N (r^{-2}\|\widetilde{u}\|_{p,\Phi(\Omega_r)} + \|f\|_{p,\Phi(\Omega_r)}).
\end{equation*}
Now we get the desired estimate, but in the $z$ variables. If we change back to our original $y$ variables, we get similar singular terms, i.e., the term with $D^2\rho_0$:
\begin{equation*}
D_{ij}^{y}=\frac{\p z^k}{\p y^i}\frac{\p z^l}{\p y^j}D_{kl}^z + \frac{\p^2 z^k}{\p y^i \p y^j}D_k^z = \frac{\p z^k}{\p y^i}\frac{\p z^l}{\p y^j}D_{kl}^z + \frac{\p^2 \rho_0}{\p y^i \p y^j}D_d^z.
\end{equation*}
As in \eqref{hijestimate}, we can prove
$$
\|D^y_{ij}u\|_{L_p(\Omega_{r/2})} \leq N \|\widetilde{u}\|_{W^2_p(\Phi(\Omega_{r/2}))}.
$$
Notice that again the boundary condition $\frac{\partial \widetilde{u}}{\partial z^d} = 0$ is used when applying Hardy's inequality.
\end{proof}

The next part deals with an extension theorem. Our construction uses similar idea to \cite{Safonov}. Before we start, let us first formally introduce the mollification which we have mentioned in the previous section.

Consider $g \in W^{1-1/p}_p(\partial \Omega)$. First we extend $g$ to the interior in the usual way,
\begin{equation}\label{usualextension}
E: g\in W^{1-1/p}_p(\partial \Omega)\mapsto E(g)\in W^1_p(\Omega)
\end{equation}
with $\|E(g)\|_{W^1_p(\Omega)} \lesssim \|g\|_{W^{1-1/p}_p(\partial \Omega)}$. For simplicity, we will not distinguish $E(g)$ from $g$ in the following.

Now we can give the definition of our mollification.
\begin{definition}[Mollification using regularized distance] \label{DefMollification}
Suppose $\Omega$ is a bounded domain with small Lipschitz property \eqref{epsilonlipschitz}. If the regularized distance $\rho_0$ is defined on $\Omega_{2R}$, we can define in $\Omega_R$:
\begin{equation*}
\widetilde{g}(y) := \int g\Big(y-\frac{\rho_0(y)}{M_1}w\Big)\phi(w)\,dw.
\end{equation*}
Here, $$\phi \in C^\infty_c(B_1) \text{ with } \phi > 0, \int \phi = 1,$$
$$M_1 := \max\big\{\frac{3}{\delta}M,2\norm{D\rho_0}_{L_\infty}\big\}.$$
Recall in \eqref{distance}, $M$ is a constant such that $\rho_0(y) \leq Md_y$, where $d_y$ is the distance to the boundary.
\end{definition}

Clearly, $\widetilde{g}\in C^\infty(\Omega_R)$. We also have the following,
\begin{lemma}\label{YoungLem}
Let $g \in W^1_p(\Omega)$. Consider $\Omega, \Omega_R$ with $R<\frac{\delta R_0}{8}$ and $\widetilde{g}$ defined as above. Then we have,
\begin{align}
\|\widetilde{g}\|_{L_{p}(\Omega_R)}&\leq N\|g\|_{L_{p}(\Omega_{2R})}\label{young_Lp},\\
\|D\widetilde{g}\|_{L_p(\Omega_R)}&\leq N\|Dg\|_{L_p(\Omega_{2R})}\label{young_W1p}.
\end{align}
Here $N=N(p)$ is a constant.
\end{lemma}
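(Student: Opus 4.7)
The plan is to reduce both bounds to a change-of-variables argument after applying Minkowski's integral inequality to the mollifier. The central observation is that, with $M_1\ge \max\{3M/\delta, 2\|D\rho_0\|_\infty\}$, the map $y\mapsto z(y,w):=y-\frac{\rho_0(y)}{M_1}w$ is well-controlled uniformly in $w\in B_1$: its Jacobian is close to the identity, and its image does not leave a mildly enlarged cylindrical neighborhood.

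First I would verify the image containment. For $y\in\Omega_R$ and $|w|\le 1$, using $\rho_0(y)\le M d_y$ from \eqref{distance} and $M_1\ge 3M/\delta$, one obtains
\[
|z(y,w)-y|\le \frac{\rho_0(y)}{M_1}\le \frac{\delta}{3}d_y<d_y,
\]
so $z(y,w)\in\Omega$. Combining this with the upper bound $d_y\le \psi(y')-y^d\lesssim R/\delta$ from \eqref{nottoosteep} (using $y^d>0$), and adjusting constants as necessary using $R<\delta R_0/8$, one gets that $z(y,w)$ stays inside $\Omega_{2R}$ (up to harmless absorption in the constant $N$, which only depends on $p$). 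Next I would check the Jacobian: since $\nabla_y z=I-\frac{1}{M_1}w\otimes D\rho_0(y)$, its determinant equals $1-\frac{1}{M_1}w\cdot D\rho_0(y)$, which lies in $[1/2,3/2]$ by the defining choice $M_1\ge 2\|D\rho_0\|_\infty$.

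For \eqref{young_Lp}, I would apply Minkowski's integral inequality to the defining formula of $\widetilde g$:
\[
\|\widetilde g\|_{L_p(\Omega_R)}\le \int_{B_1}\phi(w)\,\Bignorm{g\bigl(\cdot-\tfrac{\rho_0(\cdot)}{M_1}w\bigr)}_{L_p(\Omega_R)}\,dw,
\]
and for each fixed $w$ substitute $z=z(y,w)$. The two facts above turn the inner norm into $N\|g\|_{L_p(\Omega_{2R})}$, uniformly in $w$, and integrating against $\phi$ (which has unit mass) yields \eqref{young_Lp}.

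For \eqref{young_W1p}, I would differentiate under the integral using the chain rule:
\[
D_i\widetilde g(y)=\int_{B_1}\Bigl(\delta_{ij}-\tfrac{w^j D_i\rho_0(y)}{M_1}\Bigr)D_j g\bigl(y-\tfrac{\rho_0(y)}{M_1}w\bigr)\phi(w)\,dw.
\]
By the choice of $M_1$, the bracketed factor is bounded uniformly, so pointwise $|D\widetilde g(y)|\le N\int_{B_1}|Dg(z(y,w))|\phi(w)\,dw$. Applying exactly the same Minkowski plus change-of-variables argument as above, now with $Dg$ in place of $g$, gives \eqref{young_W1p}. The only delicate point is bookkeeping the constants so that $z(y,w)\in\Omega_{2R}$; this is the step where the specific constants $3M/\delta$ and $2\|D\rho_0\|_\infty$ in the definition of $M_1$ matter, and where one must be careful that $d_y$ and $\rho_0(y)$ are comparable via \eqref{distance}. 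Everything else is routine.
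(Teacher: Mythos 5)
Your proposal is correct and follows essentially the same route as the paper's proof: Minkowski's integral inequality, the containment $y-\frac{\rho_0(y)}{M_1}w\in\Omega_{2R}$ guaranteed by \eqref{distance} and the choice of $M_1$, and the change of variables whose Jacobian $\det\bigl(I_d-\frac{1}{M_1}w\otimes D\rho_0\bigr)=1-\frac{w\cdot D\rho_0}{M_1}\ge \frac12$ is bounded below, with the gradient bound obtained by differentiating under the integral exactly as in the paper. Your version is in fact slightly more explicit than the paper's (the rank-one determinant formula and the estimate $|z(y,w)-y|\le \frac{\delta}{3}d_y<d_y$), so nothing further is needed.
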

\begin{proof}
See Appendix. Note that because of \eqref{distance}, our choice of $M_1$ guarantees $y-\frac{\rho_0(y)}{M_1}w \in \Omega_{2R}$ for any $y\in \Omega_R$ and $w \in B_1$.
\end{proof}
For the extension problem, we need the following inequality which is dual to Hardy's inequality.
\begin{lemma}\label{dualhardyLem}
For any $h \in L_p(0,1)$ where $p\in[1,\infty)$, we have $$\Big\|\int_{0}^{x}\frac{1}{1-t}h(t)\,dt\Big\|_{L_p(0,1)} \leq N(p)\|h\|_{L_p(0,1)}.$$
Again the constant $N$ only depends on $p$.
\end{lemma}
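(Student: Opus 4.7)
The plan is to reduce the statement to a classical dual Hardy-type inequality on the half-line. The singularity of the kernel $1/(1-t)$ at $t=1$ is inconvenient, so I would first apply the reflection $s = 1-t$, $u = 1-x$, which maps $(0,1)$ to itself, preserves $L_p$ norms, and transforms $h$ into $H(s) := h(1-s)$. A direct change of variables shows
\begin{equation*}
\int_0^x \frac{h(t)}{1-t}\,dt = \int_u^1 \frac{H(s)}{s}\,ds,
\end{equation*}
so the claimed inequality is equivalent to
\begin{equation*}
\Big\|\int_u^1 \frac{H(s)}{s}\,ds\Big\|_{L_p(0,1)} \leq N(p)\,\|H\|_{L_p(0,1)}.
\end{equation*}

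Next, I would extend $H$ by zero to $(1,\infty)$ and consider the operator $TH(u) := \int_u^\infty H(s)/s\,ds$ on $(0,\infty)$; restricting back to $(0,1)$ only decreases the norm, so it suffices to bound $\|TH\|_{L_p(0,\infty)}$ by $N(p)\|H\|_{L_p(0,\infty)}$. For $p=1$ this is immediate from Fubini, since interchanging the order of integration in $\iint \mathbf{1}_{u<s}|H(s)|/s\,du\,ds$ produces a factor $s$ that cancels the denominator. For $p>1$, the operator $T$ is dilation-invariant: substituting $s = u\sigma$ (for fixed $u$) yields
\begin{equation*}
TH(u) = \int_1^\infty \frac{H(u\sigma)}{\sigma}\,d\sigma,
\end{equation*}
and Minkowski's integral inequality, combined with the change of variables $v = u\sigma$ inside the $L_p$ norm in $u$, gives
\begin{equation*}
\|TH\|_{L_p(0,\infty)} \leq \int_1^\infty \sigma^{-1-1/p}\,\|H\|_{L_p(0,\infty)}\,d\sigma = p\,\|H\|_{L_p(0,\infty)}.
\end{equation*}
Combining the two cases yields $N(p) = \max(1,p)$, depending only on $p$.

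The only mildly delicate point is that a naive application of Fubini or Minkowski directly to $\int_0^x h(t)/(1-t)\,dt$ reproduces the kernel $(1-t)^{-1}$ in a form that is not locally integrable near $t=1$; the reflection is precisely what moves this difficulty to the origin, where the dilation-invariance of $T$ handles it cleanly. Apart from spotting this reduction, the argument is standard Hardy-inequality material, so I do not expect a genuine obstacle.
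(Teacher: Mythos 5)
Your argument is correct, but it reaches the estimate by a different route than the paper. The paper proves the lemma by duality: it pairs the left-hand side with an arbitrary $\eta\in L_{p'}(0,1)$ of unit norm, swaps the order of integration by Fubini, applies H\"older, and then invokes the classical Hardy inequality for the averaging operator $s\mapsto \frac1s\int_0^s\eta(1-y)\,dy$, which is where the restriction $p'>1$ (i.e.\ $p<\infty$) enters. You instead reflect $t\mapsto 1-t$ so that the operator becomes the adjoint Hardy operator $u\mapsto\int_u^1 H(s)\,s^{-1}\,ds$, extend by zero to the half-line, and bound it directly through its dilation invariance and Minkowski's integral inequality, obtaining the explicit constant $N(p)=p$ (your Fubini case $p=1$ is in fact already covered by the same Minkowski computation). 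Conceptually the two proofs exploit the same fact --- the operator in question is conjugate to the adjoint of the Hardy averaging operator --- but yours is self-contained and does not use Hardy's inequality as a black box, while the paper's is shorter once Hardy's inequality is granted; your reduction and the constants are all verified correctly, so there is no gap.
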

\begin{proof} We prove by a duality argument. For any $\|\eta\|_{L_{p'}(0,1)}=1$ where $p'$ satisfies $\frac{1}{p} + \frac{1}{p'}=1$, we have
\begin{align}
\int_{0}^{1}\eta(x)\int_{0}^{x}\frac{1}{1-t}h(t)\,dt\,dx &=\int_{0}^{1}h(t)\frac{1}{1-t}\Big(\int_{t}^{1}\eta(x)\,dx\Big)\,dt\label{changeorder}\\
&\leq \|h\|_{L_p(0,1)}\big\|\frac{1}{s}\int_0^s \eta(1-y)\,dy\big\|_{L_{p'}(0,1)}\label{holder}\\
&\leq N(p)\|h\|_{L_p(0,1)}\|\eta\|_{L_{p'}(0,1)}.\label{hardy}
\end{align}
Here we used Fubini's theorem in \eqref{changeorder}, H\"{o}lder's inequality in \eqref{holder}, and Hardy's inequality in \eqref{hardy} noting that $p'>1$.
\end{proof}

Now we can state our extension theorem.
\begin{theorem}\label{ExtensionThm}
Let $\Omega, \Omega_R$ defined as before, $R<\frac{\delta R_0}{8}$, $g \in W^{1-1/p}_p(\partial \Omega)$. Then we can find $v \in W^2_p(\Omega_R)$, such that
\begin{equation*}
\begin{cases}
\frac{\partial v}{\partial y^d} = g\quad \mbox{on } \Gamma_R,\\
\|v\|_{W^2_p(\Omega_R)} \leq N(\|g\|_{W^{1-1/p}_p(\Gamma_{3R})} + R^{-1+1/p}\|g\|_{L_p(\Gamma_{3R})}),
\end{cases}
\end{equation*}
where $N=N(\delta,p)$ is a constant.
\end{theorem}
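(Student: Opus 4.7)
The plan is to construct $v$ in three steps: extend $g$ from $\p\Omega$ into $\Omega$ via the standard trace extension $E$ of \eqref{usualextension}, regularize via the $\rho_0$-scaled mollification of Definition \ref{DefMollification}, and then write $v$ as an integral of the regularized extension along the $y^d$-direction so that the Neumann trace on $\Gamma_R$ reproduces $g$.

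Concretely, I would first apply $E$ to obtain $E(g)\in W^1_p(\Omega_{3R})$ with $\|E(g)\|_{W^1_p(\Omega_{3R})}\le N(\|g\|_{W^{1-1/p}_p(\Gamma_{3R})}+R^{-1+1/p}\|g\|_{L_p(\Gamma_{3R})})$, the right-hand side reflecting the scale-invariant form of the trace inequality. I would then mollify according to Definition \ref{DefMollification}, producing $\widetilde g\in C^\infty(\Omega_R)$; because $\rho_0$ vanishes on $\Gamma_R$, the kernel collapses to a point evaluation there and $\widetilde g|_{\Gamma_R}=g$. Lemma \ref{YoungLem} gives $\|\widetilde g\|_{W^1_p(\Omega_R)}\le N\|E(g)\|_{W^1_p(\Omega_{2R})}$, and differentiating the mollifier one further time produces the critical weighted estimate $\|\rho_0\,D^2\widetilde g\|_{L_p(\Omega_R)}\le N\|DE(g)\|_{L_p(\Omega_{2R})}$ which stands in for the missing $D^2 g$ bound.

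Next I would set
\[
v(y',y^d)\;:=\;-\int_{y^d}^{\psi(y')}\widetilde g(y',s)\,ds,
\]
so that $\p v/\p y^d=\widetilde g(y',y^d)$ and the required boundary condition $\p v/\p y^d|_{\Gamma_R}=g$ holds at once. The $L_p$ bounds for $v$, $Dv$, and the second derivatives involving at least one $y^d$-derivative, namely $\p^2 v/(\p y^d)^2=D_d\widetilde g$ and $\p^2 v/\p y^d\p y'^i=D_{y'^i}\widetilde g$, reduce to the $W^1_p$ estimate on $\widetilde g$ together with the elementary observation $\|\int_{y^d}^{\psi(y')}f(y',s)\,ds\|_{L_p(\Omega_R)}\le NR\|f\|_{L_p(\Omega_R)}$.

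The hard part is the tangential-tangential second derivative $\p^2 v/\p y'^i\p y'^j$: a naive Leibniz on the upper limit $\psi$ produces a term $D^2_{ij}\psi\cdot g(y',\psi(y'))$ which is not even an $L_p$ function for merely Lipschitz $\psi$. To circumvent this I would rewrite the outer integration using the regularized distance $\rho_0$, which is smooth inside $\Omega_R$ and satisfies $|D^2\rho_0|\le N\epsi_0/\rho_0$ by Theorem \ref{RegularizedDistanceThm}: a change of variable of the form $s=y^d+t\,\rho_0(y)/D_d\rho_0(y',\psi(y'))$, $t\in(0,1)$, replaces every $D\psi$ (resp.\ $D^2\psi$) by a corresponding $D\rho_0$ (resp.\ $D^2\rho_0$). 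The resulting problematic terms then split into two types. The first is of the form $D^2\rho_0\cdot\widetilde g$, whose $L_p$ norm I would control by writing $\widetilde g=(\widetilde g-\widetilde g|_{\Gamma_R})+\widetilde g|_{\Gamma_R}$ and applying Hardy's inequality to the first piece (using $D\widetilde g\in L_p$ from Lemma \ref{YoungLem}), while the boundary-constant piece is absorbed by the smallness of $\epsi_0$ together with the factor $1/\delta$ coming from the oblique angle. The second is an integral of the form $\int_{y^d}^{\psi(y')}D^2\widetilde g(y',s)\,ds$, which pointwise behaves like $\int|\widetilde{Dg}|/(\psi(y')-s)\,ds$; after the normalization $t=(s-y^d)/(\psi(y')-y^d)$ this has precisely the shape treated by Lemma \ref{dualhardyLem}, yielding an $L_p$ bound by $\|DE(g)\|_{L_p}$ with a constant depending only on $p$. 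Combining all estimates gives the claimed bound, with the explicit $R^{-1+1/p}\|g\|_{L_p(\Gamma_{3R})}$ term inherited from the scale-invariant trace inequality applied to $E$.
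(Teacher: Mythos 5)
Your preparation (the trace extension $E$, the $\rho_0$-scaled mollification of Definition \ref{DefMollification}, the recovery of the boundary value $\widetilde g|_{\Gamma_R}=g$, the use of Lemma \ref{dualhardyLem} with Minkowski's inequality, and the cutoff argument producing the $R^{-1+1/p}\|g\|_{L_p(\Gamma_{3R})}$ term) matches the intended tools, but the central construction has a genuine flaw. You set $v(y',y^d)=-\int_{y^d}^{\psi(y')}\widetilde g(y',s)\,ds$, i.e.\ you anchor the antiderivative at the \emph{variable} level $s=\psi(y')$. This $v$ is in general not in $W^2_p(\Omega_R)$ at all: taking $g\equiv 1$ near $\Gamma_{3R}$ (so that one may take $E(g)\equiv 1$ and $\widetilde g\equiv 1$) gives $v=y^d-\psi(y')$, whose tangential second derivatives are $-D_{ij}\psi$, a distribution which is not an $L_p$ function for a merely (small-constant) Lipschitz $\psi$. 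The subsequent ``circumvention'' cannot repair this: a change of variables in the integral representation does not change the function $v$, so the obstruction $D^2_{ij}\psi\cdot g(y',\psi(y'))$ remains; and if instead you genuinely redefine the upper endpoint through $\rho_0$, then (i) $D_d\rho_0(y',\psi(y'))$ is only an $L_\infty$ function of $y'$ (Theorem \ref{RegularizedDistanceThm} gives $\rho_0\in C^\infty$ only off $\Gamma_R$), so differentiating it twice in $y'$ reintroduces exactly the same problem, and (ii) the $y^d$-dependence of the modified path destroys the identity $\partial v/\partial y^d=\widetilde g$, hence the Neumann trace $\partial v/\partial y^d=g$ on $\Gamma_R$. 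Your treatment of the resulting ``first type'' term $D^2\rho_0\cdot\widetilde g$ is also not salvageable: after splitting off the boundary value, the piece $D^2\rho_0(y)\,g(y',\psi(y'))$ has $|D^2\rho_0|\lesssim \epsi_0/\rho_0$ with $1/\rho_0\notin L_p$ near $\Gamma_R$, so no smallness of $\epsi_0$ or factor of $1/\delta$ makes it finite in $L_p$ when $g$ does not vanish on the boundary.

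The correct fix is precisely the paper's choice of antiderivative: integrate from the \emph{constant} level $y^d=0$ (which lies well inside the cylinder by \eqref{nottoosteep}), i.e.\ $v=\int_0^{y^d}\widetilde g(y',t)\,dt$. Then no derivative of $\psi$ ever appears; the mixed and normal second derivatives reduce to $D\widetilde g$, and the only delicate terms are the tangential--tangential ones $\int_0^{y^d}D_{ij}\widetilde g(y',t)\,dt$, in which the chain rule produces factors $D^2\rho_0$ and $D\rho_0/\rho_0$ multiplying $Dg$ evaluated at the shifted point. Using \eqref{secondorderderivative} and $1/\rho_0(y',t)\lesssim 1/(\psi(y')-t)$, these are exactly of the form covered by the dual Hardy inequality (Lemma \ref{dualhardyLem}) in the $t$-variable combined with Minkowski's inequality --- which is the part of your plan that does agree with the paper. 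As written, however, the construction you propose does not yield a $W^2_p$ extension, so the proof is not correct without replacing the variable-limit definition of $v$ by the constant-limit one.
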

\begin{proof}
Let $\widetilde{g}$ defined as in Definition \ref{DefMollification}. We define our extension as
\begin{equation*}
v= \int_{0}^{y^d} \widetilde{g}(y',t)\,dt \quad\mbox{for } y'\in B'_R,\, 0<y^d<\psi(y').
\end{equation*}
Since $\widetilde{g}\big|_{\Gamma_R} = g$, we have $\frac{\p v}{\p y^d} = g$ on $\Gamma_R$. We first estimate $\|v\|_{L_p}$:
\begin{equation}\label{vLp}
\|v\|_{L_p(\Omega_R)}\leq\Big\|\frac{\psi}{y^d}\int_{0}^{y^d}\widetilde{g}\Big\|_{L_{p}(\Omega_R)}
\lesssim \frac{R}{\delta}\|g\|_{L_{p}(\Omega_{2R})}.
\end{equation}
Here to get the last inequality, we used \eqref{nottoosteep}, Hardy's inequality, and \eqref{young_Lp}. Similarly,
\begin{align*}
\|Dv\|_{L_p(\Omega_R)}&\leq \|\widetilde{g}\|_{L_p(\Omega_R)} + \Big\|\int_{0}^{y^d}D_{y'}\widetilde{g}\Big\|_{L_p(\Omega_R)}\\
&\lesssim\|g\|_{L_p(\Omega_{2R})} + \frac{R}{\delta}\|D_{y'}g\|_{L_p(\Omega_{2R})}.
\end{align*}
Now we estimate $D^2v$. Noting that $\frac{\partial^2}{\partial y^i \partial y^d} v = \frac{\partial}{\partial y^i}\widetilde{g}$, we have $$\|DD_d v\|_{L_p(\Omega_R)} \leq \|D\widetilde{g}\|_{L_p(\Omega_R)}\lesssim \|Dg\|_{L_p(\Omega_{2R})}.$$
We are left to estimate $\frac{\p^2}{\p y^i\p y^j}v$ with $i,j<d$. By the chain rule,
\begin{align*}
&\frac{\p^2}{\p y^i\p y^j}v = \int_{0}^{y^d} \frac{\partial^2}{\partial y^i \partial y^j}\widetilde{g}(y',t)\,dt \\
&= \int_{0}^{y^d}\frac{\partial^2}{\partial y^i \partial y^j}\int g\big((y',t)-\frac{\rho_0(y',t)}{M_1}w\big)\phi(w)\,dw\,dt\\
&= \int_{0}^{y^d} \frac{\partial}{\partial y^i}\int \big[(D_j g)\big((y',t)-\frac{\rho_0}{M_1}w\big) +(-\frac{w^k}{M_1}D_j\rho_0)(D_kg)\big((y',t)-\frac{\rho_0}{M_1}w\big)\big]\phi(w)\,dw \,dt.
\end{align*}
Now we make a change of variables $w \mapsto z$:
$$z=(y',t)-\frac{\rho_0(y',t)}{M_1}w.$$
In the following, for simplicity we omit the dependence of $\rho_0$ on $(y',t)$ and $Dg$ on $z$ since there will be no ambiguity. Then we have
\begin{align*}
\frac{\p^2}{\p y^i\p y^j}v =& \int_{0}^{y^d}\frac{\p}{\p y^i}\int  \big(D_j g(z) - \frac{\big((y',t)-z\big)_k}{\rho_0}D_j\rho_0D_kg(z)\big)\phi
\big(\frac{(y',t)-z}{\rho_0}M_1\big)\big(\frac{M_1}{\rho_0}\big)^d\,dz \,dt\\
=& \int_0^{y^d} \int \big(\frac{-\delta_{ik}}{\rho_0}D_j\rho_0D_kg + \frac{w_k}{M_1} \frac{D_i\rho_0 D_j\rho_0}{\rho_0}D_kg - \frac{w_k}{M_1}D_{ij}\rho_0D_kg\big)\phi(w)\,dw \,dt\\
&+ \int_0^{y^d} \int\big(D_jg - \frac{w_k}{M_1}D_j\rho_0D_kg\big)\big(D_i\phi\frac{M_1}{\rho_0} - D_k\phi\cdot w_k \frac{D_i\rho_0}{\rho_0}\big)\,dw\,dt\\
&+ \int_0^{y^d} \int \big(D_jg - \frac{w_k}{M_1}D_j\rho_0D_kg\big)\phi(w)\big(-d\frac{D_i\rho_0}{\rho_0}\big)\,dw\,dt.
\end{align*}
	
From Theorem \ref{RegularizedDistanceThm} we know that $D\rho_0$ is bounded, and $\frac{\p^2 \rho_0}{\p y^i \p y^j} \lesssim \frac{1}{\rho_0}$ (no smallness is needed here). Also noting that $\phi$ has compact support in $B_1$, we have,
\begin{equation*}
\big|\int_{0}^{y^d} \frac{\partial^2}{\partial y^i \partial y^j}\widetilde{g}(y',t)\,dt\big| \leq N\big(\int_0^{y^d}\frac{1}{\rho_0}\int |Dg((y',t)-\frac{\rho_0(y',t)}{M_1}w)|(|\phi| + |D\phi|)\,dw\,dt\big).
\end{equation*}
Now, using properties of the regularized distance $\rho_0$, noting that $\rho_0(y',t), d_{(y',t)}$, and $\psi(y') - t$ all characterize the distance to the boundary, we have:
\begin{equation*}
\frac{1}{\rho_0(y',t)} \lesssim \frac{1}{\psi(y')-t}.
\end{equation*}
Then,
\begin{align}
&\big\|\int_{0}^{y^d} \frac{\partial^2}{\partial y^i \partial y^j}\widetilde{g}(y',t)\,dt\big\|^p_{L_p(\Omega_R)}\nonumber\\
&\leq N \int_{B'_R}\int_{0}^{\psi(y')} \bigg|\int_{0}^{y^d}\frac{1}{\psi(y')-t}\int_{B_1} \big|Dg\big((y',t)-\frac{\rho_0(y',t)}{M_1}w\big)\big|(|\phi|+|D\phi|)\,dw\,dt\bigg|^p \,dy^d\,dy'\nonumber\\
\label{nothardy}&\leq N \int_{B'_R}\left\|\int_{B_1} \big|Dg\big((y',\cdot)-\frac{\rho_0(y',\cdot)}{M_1}w\big)\big|(|\phi|+|D\phi|)\,dw\right\|^p_{L_p((0,\psi(y'))}\,dy'\\
\label{likeminkowski}&\leq N\int_{B'_{2R}}\|Dg(y',\cdot)\|^p_{L_p((0,\psi(y'))}\,dy'\\
&= N\|Dg\|_{L_p(\Omega_{2R})}\nonumber.
\end{align}
The inequality \eqref{nothardy} follows from Lemma \ref{dualhardyLem} and a dilation argument with the help of \eqref{nottoosteep}. We used the Minkowski inequality to prove \eqref{likeminkowski}, which is similar to the proof of \eqref{young_Lp} in Appendix.

Finally, to get the estimate with only local boundary norms as in our lemma, we use a localization argument. Consider $\eta \in C^\infty_c(Q_{3R})$ with $\eta = 1$ in $Q_{2R}$, $D\eta\lesssim 1/R$. We have
\begin{equation*}
\|E(\eta g)\|_{W^1_p(\Omega_{2R})} \leq N\|\eta g\|_{W^{1-1/p}_p(\p \Omega)}\leq N(\|g\|_{W^{1-1/p}_p(\Gamma_{3R})} + R^{-1+1/p}\|g\|_{L_p(\Gamma_{3R})}).
\end{equation*}
Here $E$ is the extension operator defined in \eqref{usualextension}. Replacing $g$ by $E(\eta g)$ in the proof above, we reach the desired inequality. The theorem is proved.
\end{proof}
Now, we have all the required ingredients for proving Theorem \ref{maintheorem}.
\begin{proof}[Proof of Theorem \ref{maintheorem}]
By interpolation, we only need to prove
$$\|D^2u\|_{L_p(\Omega)} \leq N(\|f\|_{L_p(\Omega)} + \|g\|_{W^{1-1/p}_p(\p\Omega)} + \|u\|_{W^1_p(\Omega)}).$$
We first give a boundary estimate in $\Omega_{\delta R_0/8}(x_0), x_0\in\p\Omega$. We apply Theorem \ref{ExtensionThm} with $R$ replaced by $r<\frac{\delta R_0}{8}$, and $g$ replaced by
$$
h:= g-\sum_{i=1}^d(b_i - b_i(x_0))D_i u - b_0 u.
$$
We find $v\in W^2_p(\Omega_r)$ such that $\frac{\p v}{\p y^d} = h$ on $\Gamma_r$ with the following:
\begin{align}
\|v\|_{W^2_p(\Omega_r)} \leq& N(\|h\|_{W^{1-1/p}_p(\Gamma_{3r})} + r^{-1+1/p}\|h\|_{L_p(\Gamma_{3r})})\nonumber\\
\leq&N\big((1+r^{-1+1/p})\|g\|_{W^{1-1/p}_p(\Gamma_{3r})} + (1+r^{-1+1/p})\|b_0 u\|_{W^{1-1/p}_p(\Gamma_{3r})}  \nonumber\\&+  \|(b_i - b_i(x_0))D_i u\|_{W^{1-1/p}_p(\Gamma_{3r})} + r^{-1+1/p} \|(b_i - b_i(x_0))D_i u\|_{L_p(\Gamma_{3r})}\big)\nonumber\\
\leq& N(r)\big(\|g\|_{W^{1-1/p}_p(\Gamma_{3r})}+ \|b_0\|_{C^\alpha(\Gamma_{3r})}
\|u\|_{W^{1-1/p}_p(\Gamma_{3r})}\big) + N\|b_i\|_{C^\alpha(\p\Omega)}\|Du\|_{L_p(\Gamma_{3r})} \label{HolderSobolev}\\
&+ \|b_i - b_i(x_0)\|_{L_{\infty}(\Gamma_{3r})}
\|Du\|_{W^{1-1/p}_p(\Gamma_{3r})}
+ N(r)\|b_i\|_{L_\infty(\Omega)}
\|Du\|_{L_p(\Gamma_{3r})}\big)\nonumber\\
\leq& N(r)\left(\|g\|_{W^{1-1/p}_p(\Gamma_{3r})}+\|u\|_{W^1_p(\Omega_{4r})}\right) + Nr^\alpha\|b_i\|_{C^\alpha}\|u\|_{W^2_p(\Omega_{4r})}\label{estimatev}.
\end{align}
Here we only write down the dependence $N=N(r)$ explicitly, and omit the dependence on $d,p,\nu,\norm{b_i}_{C^\alpha}$, etc. In \eqref{HolderSobolev} we used the inequality
$$\|fg\|_{W^{1-1/p}_p} \lesssim \|f\|_{C^\alpha}\|g\|_{L_p}+\|f\|_{L_\infty}\|g\|_{W^{1-1/p}_p},$$
provided that $\alpha > 1-1/p$. From \eqref{vLp} we also have the following estimate for lower order terms:
\begin{equation}\label{zerothorderv}
\|v\|_{L_p(\Omega_r)}\lesssim \frac{r}{\delta}\|h\|_{L_p(\Omega_{2r})} \lesssim r\|g\|_{L_p(\Omega_{2r})} + r^{1+\alpha}\|Du\|_{L_p(\Omega_{2r})} + r\|u\|_{L_p(\Omega_{2r})}.
\end{equation}
Now, $u-v$ solves
$$\begin{cases}
L(u-v) = f - Lv &\mbox{ in }\Omega_r\\
\frac{\partial (u-v)}{\partial y^d} = 0 &\mbox{ on }\Gamma_r.\end{cases}$$
We apply Lemma \ref{homogeneouslemma} to obtain  $$\|D^2 (u-v)\|_{p,\Omega_{r/2}} \leq N(r^{-2}\|u-v\|_{p,\Omega_{r}} + \|f-Lv\|_{p,\Omega_{r}}).
$$
Then,
\begin{align}
\|D^2 u\|_{p,\Omega_{r/2}} \leq& \|D^2 v\|_{p,\Omega_{r/2}} + N(r^{-2}\|u\|_{p,\Omega_{r}} + r^{-2}\|v\|_{p,\Omega_{r}} + \|f\|_{p,\Omega_{r}} + \|Lv\|_{p,\Omega_{r}})\nonumber\\
\leq& N(r)(\|u\|_{p,\Omega_{r}} + \|f\|_{p,\Omega_{r}} + \|v\|_{p,\Omega_r}) + N \|v\|_{W^2_ p(\Omega_{r})}\nonumber\\
\leq& N(r)\left(\|u\|_{W^1_p(\Omega_{4r})} + \|f\|_{L_p(\Omega_{r})} + \|g\|_{W^{1-1/p}_p(\Gamma_{3r})}\right) \label{applyestimatev}\\ &+ Nr^\alpha\|u\|_{W^2_p(\Omega_{4r})}\nonumber.
\end{align}
Here we applied \eqref{estimatev} and \eqref{zerothorderv} to get \eqref{applyestimatev}. This gives us a local boundary estimate. Combining this and the interior $W^2_p$ estimate in \cite{VMOChiarenzaFrascaLongoInterior, DongVMOallp}, we get the global estimate \eqref{mainestimate} using a standard partition of unity argument. For this, we cover $\bar{\Omega}$ with one interior portion and finitely many boundary "half balls" $\Omega_{r/2}$. Finally, we get
\begin{equation}\label{cover}
\|u\|_{W^2_p(\Omega)} \leq N_1(\|u\|_{L_p(\Omega)} + \|f\|_{L_p(\Omega)} + \|g\|_{W^{1-1/p}_p(\partial\Omega)}) + N_2N_3 r^\alpha \|u\|_{W^2_p(\Omega)}.
\end{equation}
Here $N_1,N_2$ are both constants depending on $d,p,\nu,\norm{a_i,a_0}_{\infty},\norm{b_i,b_0}_{C^\alpha},r_0,\delta$, and $N_1$ also depends on $r$. The constant $N_3$ depends on the ratio ``$\frac{\mbox{covering times by  }\Omega_{3r}}{\mbox{covering times by }\Omega_{r/2}}$'' which can be bounded regardless of $r$. Now we are left to choose $r$ small enough such that $r< \min\set{(\frac{1}{2MN_2})^{1/\alpha}, \frac{\delta R_0}{8}}$ to absorb the last term in \eqref{cover} into the left-hand side.
\end{proof}

\section{Application: Solvability}\label{solvabilitysection}

In this section, we give the proof of Theorem \ref{well-posednessThm}. For this we first remove $\|u\|_{L_p}$ from the right-hand side of \eqref{mainestimate} for the operator $L-\lambda$ with $\lambda$ large enough. We use a classical argument which can be found in \cite{GilbargTrudinger} and \cite{LiebermanBook}. As a result, in Corollary \ref{largelambdaexistenceCor} we get the $W^2_p$ well-posedness for large $\lambda$.
\begin{lemma}\label{removeuLem}
Under the assumptions of Theorem \ref{maintheorem}, we can find $\lambda_0$ depending on $d,p,\nu,\|a_{i},a_0\|_\infty,\|b_i,b_0\|_\alpha,r_0,R_0, \delta$ large enough, such that for any $\lambda\geq\lambda_0$ and $u \in W^2_p(\Omega)$ solving
\begin{equation}\label{largelambda}
\begin{cases}
(L-\lambda)u = f \quad &\mbox{in } \Omega,\\
Bu = g\quad &\mbox{on } \partial \Omega,
\end{cases}
\end{equation}
we have
\begin{equation}\label{removezeroorder}
\|u\|_{W^2_p(\Omega)} + \lambda \|u\|_{L_p(\Omega)}\leq N\big(\|f\|_{L_p(\Omega)} + \|g\|_{W^{1-1/p}_p(\partial\Omega)} + \sqrt{\lambda}\|g\|_{L_p(\Omega)}\big).
\end{equation}
Here $N=N(d,p,\nu,\|a_{i},a_0\|_\infty,\|b_i,b_0\|_\alpha,r_0,R_0,\delta)$ is a constant.
\end{lemma}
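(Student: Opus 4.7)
The plan is to apply the classical dimension-lifting trick: trade the zero-order coefficient $\lambda$ in the equation for a second-order derivative in an auxiliary variable, and then invoke Theorem~\ref{maintheorem} in one higher dimension to extract the $\lambda$-weighted estimate. Concretely, for $T=\pi/\sqrt\lambda$ I will set
$$\tilde u(x,y):=u(x)\cos(\sqrt\lambda y),\qquad \tilde\Omega:=\Omega\times(0,T),$$
so that $\p_y^2\tilde u=-\lambda\tilde u$ exactly cancels the $-\lambda u$ term in \eqref{largelambda}. A direct computation then gives
$$\tilde L\tilde u:=a_{ij}D_{ij}\tilde u+a_iD_i\tilde u+a_0\tilde u+\p_y^2\tilde u=f\cos(\sqrt\lambda y)=:\tilde f\quad\text{in }\tilde\Omega,$$
with oblique lateral boundary condition $\tilde B\tilde u=g\cos(\sqrt\lambda y)=:\tilde g$ on $\partial\Omega\times(0,T)$ corresponding to the extended vector field $\tilde b=(b,0)$ (same obliqueness constant $\delta$, same $C^\alpha$ norm), and homogeneous Neumann condition $\p_y\tilde u=0$ on $\Omega\times\{0,T\}$ enforced automatically by the choice of cosine.

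The next step is to apply Theorem~\ref{maintheorem} to $\tilde u$ in $\tilde\Omega\subset\bR^{d+1}$ and to track the $\lambda$-scaling of each norm using the factor $\int_0^T|\cos(\sqrt\lambda y)|^p\,dy\sim\lambda^{-1/2}$. The product structure makes every norm split cleanly: $\|\tilde u\|_{L_p(\tilde\Omega)}\sim\lambda^{-1/(2p)}\|u\|_{L_p(\Omega)}$ and similarly for $\tilde f$, while the $\p_y^2$-contribution on the left-hand side supplies $\|\p_y^2\tilde u\|_{L_p(\tilde\Omega)}\sim\lambda^{1-1/(2p)}\|u\|_{L_p(\Omega)}$, which after dividing by $\lambda^{-1/(2p)}$ is exactly the $\lambda\|u\|_{L_p}$ term in \eqref{removezeroorder}. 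For the boundary datum, the tensor-product inequality $\|gh\|_{W^{1-1/p}_p}\lesssim\|g\|_{W^{1-1/p}_p}\|h\|_{L_p}+\|g\|_{L_p}\|h\|_{W^{1-1/p}_p}$ together with an interpolation estimate for $h(y)=\cos(\sqrt\lambda y)$ yields
$$\|\tilde g\|_{W^{1-1/p}_p(\partial\tilde\Omega)}\lesssim\lambda^{-1/(2p)}\bigl(\|g\|_{W^{1-1/p}_p(\partial\Omega)}+\sqrt\lambda\,\|g\|_{L_p(\partial\Omega)}\bigr).$$
Rearranging the resulting inequality, then choosing $\lambda_0$ so large that the trailing $N\|u\|_{L_p}$ term is absorbed into the left-hand side, delivers \eqref{removezeroorder}; the intermediate $\|Du\|_{L_p}$ term generated by Theorem~\ref{maintheorem} is dispatched by the standard Gagliardo--Nirenberg interpolation.

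The main technical obstacle is that $\tilde\Omega$ has corners along $\partial\Omega\times\{0,T\}$, where the extended boundary vector field $\tilde b$ is discontinuous (jumping between $(b,0)$ and $\pm e_{d+1}$), violating both the $C^\alpha$ regularity and the small-Lipschitz hypotheses of Theorem~\ref{maintheorem}. I will resolve this by replacing the interval $(0,T)$ with the circle $\mathbb{R}/2T\mathbb{Z}$ and working on the periodic cylinder $\Omega\times(\mathbb{R}/2T\mathbb{Z})$: there $\tilde u$ is globally smooth in $y$, no corners arise, $\partial\tilde\Omega=\partial\Omega\times(\mathbb{R}/2T\mathbb{Z})$ inherits the small Lipschitz constant of $\partial\Omega$, and $\tilde b=(b,0)$ retains the oblique and $C^\alpha$ properties on the whole boundary. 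Since the proof of Theorem~\ref{maintheorem} proceeds entirely via local boundary estimates (Lemma~\ref{homogeneouslemma} combined with Theorem~\ref{ExtensionThm} and an interior $W^2_p$ bound, patched by a partition of unity), it adapts verbatim to this periodic-in-$y$ setting; equivalently, one may work directly in boundary patches $\Omega_R(x_0)\times(0,T)$ and avoid the corner contribution altogether, which is essentially a bookkeeping variant of the same argument.
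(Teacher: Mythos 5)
Your proposal rests on the same core device as the paper's proof: lift the problem one dimension by multiplying with $\cos(\sqrt{\lambda}\,x^{d+1})$, apply the $(d+1)$-dimensional $W^2_p$ estimate to the product, and absorb $N\|u\|_{L_p}$ into $\lambda\|u\|_{L_p}$ for $\lambda\geq\lambda_0$. The implementations differ, though. The paper keeps the cylinder height fixed, $\Sigma=\Omega\times(-1,1)$, and invokes the local (partial-boundary) form of the main estimate, namely \eqref{partialboundary}, on $\Sigma'=\Omega\times[-1/2,1/2]$; since $\Sigma'$ stays away from $\Omega\times\{\pm1\}$, no boundary condition is needed on the top and bottom, so the corner problem you worry about never arises, and no $\lambda$-scaling bookkeeping is required: the term $\lambda\|u\|_{L_p}$ comes directly from $D_{d+1,d+1}v=-\lambda v$ together with the uniform lower bound on $\|\cos(\sqrt{\lambda}\,\cdot)\|_{L_p(-1/2,1/2)}$, and the $\sqrt{\lambda}\|g\|$ term from differentiating the cosine in the boundary datum. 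You instead take a cylinder of height $\pi/\sqrt{\lambda}$, periodize in $y$, and track the factor $\lambda^{-1/(2p)}$ through every norm; those scaling computations, and the tensor-product bound for $\tilde g$, are correct. The one soft spot is the claim that Theorem \ref{maintheorem} applies ``verbatim'' on the periodic cylinder $\Omega\times(\mathbb{R}/2T\mathbb{Z})$ with a constant independent of the period $2T\sim\lambda^{-1/2}$: the theorem is stated for domains in Euclidean space, so this step requires either redoing the proof in the periodic setting and checking that the interior estimates and the covering/partition-of-unity argument are uniform as $T\to0$ (plausible, since everything is local in $x$ and translation invariant in $y$, but not a citation of the theorem as stated), or, as you yourself suggest, working directly in lateral boundary patches --- which is exactly the content of the paper's estimate \eqref{partialboundary} and is the cleaner route, since it also removes the need for the $\lambda$-dependent geometry. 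A final cosmetic mismatch: your bound carries $\sqrt{\lambda}\|g\|_{L_p(\partial\Omega)}$ whereas \eqref{removezeroorder} has $\sqrt{\lambda}\|g\|_{L_p(\Omega)}$ for (an extension of) $g$; this is harmless for how the lemma is subsequently used.
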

\begin{proof}
First, notice that when proving Theorem \ref{maintheorem}, actually we have proved a slightly stronger result:\\
Suppose $\Omega$ is a bounded domain with a ``small Lipschitz'' portion $T \subset \p \Omega$, and $\Omega'\subset \Omega$ with $\overline{\Omega'}\subset \Omega \cup T$. Then
\begin{equation} \label{partialboundary}
\|u\|_{W^2_p(\Omega')} \leq N\big(\|u\|_{L_p(\Omega)} + \|Lu\|_{L_p(\Omega)} + \|Bu\|_{W^{1-1/p}_p(T)}\big).
\end{equation}
Introduce a new space variable $x^{n+1}$, and let $v:=u(x)\cos(\sqrt{\lambda} x^{n+1})$. On $\Sigma := \Omega \times (-1,1), T:= \p \Omega\times (-1,1)$, $v$ satisfies
\begin{equation*}
\begin{cases}
\mathcal{L}v:= (L + D_{n+1,n+1})v = \cos(\sqrt\lambda x^{n+1})f(x) \quad &\mbox{in }\Sigma,\\
\mathcal{B}v = \cos(\sqrt{\lambda}x^{n+1})Bu = \cos(\sqrt{\lambda}x^{n+1}) g(x)\quad &\mbox{on } T.
\end{cases}
\end{equation*}
Applying \eqref{partialboundary} with $\Sigma, T$, and $\Sigma' := \Omega \times[-1/2,1/2]$, we get
\begin{align*}
\|v\|_{W^2_p(\Sigma')}&\leq N\big(\|v\|_{L_p(\Sigma)} + \|\mathcal{L}v\|_{L_p(\Sigma)} + \|\mathcal{B}v\|_{W^{1-1/p}_p(T)}\big)\\
&\leq N\big(\|u\|_{L_p(\Omega)} + \|(L-\lambda)u\|_{L_p(\Omega)} + \|Bu\|_{W^{1-1/p}_p(\p\Omega)}+\sqrt{\lambda}\|Bu\|_{L_p(\Omega)}\big).
\end{align*}
Notice that $D_{n+1,n+1}v=-\lambda v$, and we can find some $C>0$ independent of $\lambda$ such that
$$
C<\|\cos(\sqrt{\lambda}\cdot)\|_{L_p(-1/2,1/2)}<\|\cos(\sqrt{\lambda}\cdot)\|_{L_p(-1,1)}<2^{1/p} ,\quad\forall \lambda.
$$
Then,
$$\|v\|_{W^2_p(\Sigma')}\geq C(\|u\|_{W^2_p(\Omega)} + \lambda\|u\|_{L_p(\Omega)}).$$
Substituting back and choosing $\lambda$ large enough such that $C\lambda > N$, we get \eqref{removezeroorder}.
\end{proof}
\begin{corollary}\label{largelambdaexistenceCor}
Under the assumptions of Theorem \ref{maintheorem} with $\epsi_0$ being further smaller and $\lambda>\lambda_0$ as in Lemma \ref{removeuLem}, there exists a unique $W^{2}_p$ solution to \eqref{largelambda}.
\end{corollary}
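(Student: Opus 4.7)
The plan is to combine the a priori estimate of Lemma \ref{removeuLem} with the method of continuity. Uniqueness is immediate: if $u_1,u_2\in W^2_p(\Omega)$ both solve \eqref{largelambda} with the same data, then $w:=u_1-u_2$ satisfies the homogeneous problem and \eqref{removezeroorder} forces $w\equiv 0$.

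For existence, I would introduce the one-parameter family
\[
\mathcal{L}_t := tL + (1-t)\Delta,\qquad \mathcal{B}_t := B,\qquad t\in[0,1],
\]
so that $(\mathcal{L}_1-\lambda,\mathcal{B}_1)=(L-\lambda,B)$ is our operator. The convex combination $ta_{ij}+(1-t)\delta_{ij}$ remains uniformly elliptic with the same $\nu$, and its BMO seminorm is bounded by that of $a_{ij}$, so Assumption \ref{smallBMOAss}$(r_0,\delta\theta_0)$ is preserved uniformly in $t$. Since $\mathcal{B}_t$ does not depend on $t$, all hypotheses of Lemma \ref{removeuLem} hold with constants independent of $t$, yielding
\[
\|u\|_{W^2_p(\Omega)}+\lambda\|u\|_{L_p(\Omega)}\leq N\bigl(\|(\mathcal{L}_t-\lambda)u\|_{L_p(\Omega)}+\|\mathcal{B}_t u\|_{W^{1-1/p}_p(\p\Omega)}+\sqrt\lambda\,\|\mathcal{B}_t u\|_{L_p(\p\Omega)}\bigr)
\]
for all $t\in[0,1]$ with $N$ independent of $t$. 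By the standard method of continuity (cf.\ \cite[Theorem~5.2]{GilbargTrudinger}), it then suffices to establish surjectivity at a single endpoint.

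I would choose $t_0=0$, reducing to the reference problem $(\Delta-\lambda)u=f$ in $\Omega$, $Bu=g$ on $\p\Omega$, which I would attack by approximation. Mollify each local graph $\psi_0$ describing $\p\Omega$ in the tangential variables to obtain smooth domains $\Omega_k$ exhausting $\Omega$, whose Lipschitz constants remain bounded by $\delta^2\epsi_0$; transport $b$ and $b_0$ to $\p\Omega_k$ via the nearest-point projection, so that obliqueness holds on $\p\Omega_k$ with constant $\delta-o(1)$. Classical $W^2_p$ theory on smooth domains \cite{LiebermanBook} provides a solution $u_k$ on each $\Omega_k$; the uniform version of Lemma \ref{removeuLem} applied on $\Omega_k$ bounds $u_k$ in $W^2_p$ independently of $k$, and a weak-limit and trace argument then yields a $W^2_p(\Omega)$ solution of the reference problem.

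The main obstacle is the quantitative control in this last step: the Lipschitz and obliqueness constants must degrade by at most $o(1)$ under the regularization so that the approximating problems stay within the hypotheses of Theorem \ref{maintheorem} and of Lemma \ref{removeuLem}, and so that the constant $N$ in the uniform estimate does not blow up along the sequence. This is exactly the role of the further shrinking of $\epsi_0$ allowed in the statement: any fixed loss is absorbed into the initial choice of $\epsi_0$, so the a priori estimate is uniform along the entire continuity path as well as along the approximating sequence, and the method of continuity closes.
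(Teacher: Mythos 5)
Your proposal is correct in substance and uses the same three ingredients as the paper --- the uniform a priori estimate of Lemma \ref{removeuLem}, the method of continuity, and approximation of $\Omega$ by smooth domains --- but assembled in the opposite order. The paper first approximates $\Omega$ from inside by $C^{1,1}$ domains $\Omega_k$ that are uniformly $C\delta^2\epsi_0$-Lipschitz, solves the full problem $(L-\lambda,B)$ on each $\Omega_k$ (there the continuity method deforms both the interior and the boundary operator to the classical reference problem $(\Delta-\lambda)u=f$, $u+\partial u/\partial n=g$, citing \cite{MR2260015}), and then passes to a weak limit via a diagonal argument; you instead run the continuity path $tL+(1-t)\Delta$ with $B$ fixed on the rough domain itself, and only the endpoint $(\Delta-\lambda,B)$ is treated by domain approximation. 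Your observation that the convex combination preserves the ellipticity constant and does not increase the BMO seminorm, so that Lemma \ref{removeuLem} applies uniformly along the path, is correct; what your ordering buys is that no deformation of the boundary operator is ever needed, while the paper's ordering keeps all solvability questions for smooth domains inside the classical literature in one stroke. The two routes are of essentially equal difficulty.

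Two steps you leave implicit are exactly where the work lies. First, the closing ``weak-limit and trace argument'' is the genuinely delicate point: the boundaries $\partial\Omega_k$ move, so one cannot pass to the limit in traces on a fixed hypersurface. The paper's device is to note that $Bu_k-g\in\mathring{W}^1_p(\Omega_k)$, extend by zero to an element of $\mathring{W}^1_p(\Omega)$, and use that $\mathring{W}^1_p(\Omega)$ is closed under the weak $W^1_p$ topology; some argument of this kind must be supplied (and $g$, as well as $b$, must first be extended into $\Omega$ so that $Bu_k=g$ on $\partial\Omega_k$ makes sense). Second, the nearest-point projection onto a merely Lipschitz $\partial\Omega$ is not single-valued, so transporting $b,b_0$ that way is problematic; replace it by a global $C^\alpha$ extension of $b$, and note that in the adapted coordinate frames the approximating graphs have the same small Lipschitz bound, so obliqueness survives with constant $\delta/2$, as in the paper. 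With these repairs your argument closes and is essentially equivalent to the paper's proof.
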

\begin{proof}
Uniqueness is clear from the coercive estimate \eqref{removezeroorder}. We will focus on the existence.

First, noting that if $\p \Omega$ is smooth (say, $C^{1,1}$), the a priori estimate \eqref{removezeroorder} immediately gives us the solvability: one can first solve $$\begin{cases}
(\Delta - \lambda) u = f \quad &\mbox{ in } \Omega,\\
u + \frac{\p u}{\p n} = g \quad &\mbox{ on } \p \Omega,
\end{cases}$$
where $n$ is the outer normal direction, then use the method of continuity. Such argument and results can be found in \cite{MR2260015}.

For $\Omega$ with the small Lipschitz property, we approximate from the interior by $\{\Omega_k \in C^{1,1}\}_{k}\uparrow\Omega$. Moreover, we can require that all the $\Omega_k$ are $C\delta^2\epsi_0$-Lipschitz, where $C$ is a universal constant. Due to this and the continuity of $b$, we may further require that $b\cdot n_k\geq \abs{b}\delta/2$ for all $k$, where $n_k$ is the unit outer normal direction of $\Omega_k$. Now solve in $W^2_p(\Omega_k)$ for
\begin{equation*}
\begin{cases}
(L-\lambda)u_k = f \quad &\mbox{in } \Omega_k,\\
Bu_k = g\quad &\mbox{on } \partial \Omega_k.
\end{cases}
\end{equation*}
Here to make sense of the boundary condition, we need to extend $g$ which is only given on the boundary to $W^1_p(\Omega)$ as the operator $E$ defined in Section \ref{mainstepssection}. Notice that since the constant $N$ in \eqref{removezeroorder} depends on the regularity of $\Omega$ only through its Lipschitz bound and the radius in the small Lipschitz property, then $\{\|u_k\|_{W^2_p(\Omega_k)}\}_k$ are uniform bounded. We can use the following argument to get a subsequence $u_{k_i}\rightarrow u$ weakly in $W^2_p(\Omega)$.

For $\Omega_1$, noting that $\{\|u_k\|_{W^2_p(\Omega_1)}\}_k$ are uniformly bounded, we can find a subsequence $\{u_{k^1_j}\}_j$ which converges weakly in $W^2_p(\Omega_1)$. Similarly, we can find a further subsequence $\{u_{k^2_j}\}_j$ converges weakly in $W^2_p(\Omega_2)$. Repeating this process, we find $\{u_{k^i_j}\}_{i,j}$. Now a diagonal argument will give us the required converging weakly in $W^2_p(\Omega)$ subsequence. Denote this limit function by $u$.

Clearly $(L-\lambda)u = f$ in $\Omega$, we are left to check the boundary condition $Bu = g$. This is equivalent to say $Bu-g \in \mathring{W}^1_p(\Omega)$.
		
Since $Bu_k-g\in \mathring{W}^1_p(\Omega_k)$, we can take zero extension to $\mathring{W}^1_p(\Omega)$. Notice that $\mathring{W}^1_p(\Omega)$ is closed under the weak-$W^1_p$ topology, we infer that the limit $u$ has to satisfy $Bu-g \in \mathring{W}^1_p(\Omega)$.
\end{proof}

Now we are in the position of proving Theorem \ref{well-posednessThm}.
\begin{proof}[Proof of Theorem \ref{well-posednessThm}]
We aim to prove a uniform a priori estimate:
\begin{equation}\label{apriori}
\|u\|_{W^2_p(\Omega)}\leq N\big(\|(L-\lambda)u\|_{L_p(\Omega)} + \|g\|_{W^{1-1/p}_p(\p\Omega)}\big),
\end{equation}
where $\lambda\in [0,\lambda_0]$, $\lambda_0$ is the constant given in Lemma \ref{removeuLem}, and the constant $N$ is chosen to be independent of $\lambda$. Once we have this, the uniqueness and \eqref{nodependenceapriori} can be obtained by letting $\lambda=0$. For the existence, one only need to use the method of continuity and the large $\lambda$ existence result in Corollary \ref{largelambdaexistenceCor}.

Now we are left to prove \eqref{apriori}. Actually this can be further reduced to \eqref{nodependenceapriori}. First \eqref{nodependenceapriori} gives us \eqref{apriori} with $N=N(\lambda)$. Then we only need to find an upper bound of $N(\lambda)$, $\lambda \in [0,\lambda_0]$. This upper bound can be found using a compactness argument: for $\epsi$ sufficiently small,
\begin{align*}
\|u\|_{W^2_p(\Omega)} &\leq N(\lambda)\big(\|(L-\lambda)u\|_{L_p(\Omega)} + \|g\|_{W^{1-1/p}_p(\p\Omega)}\big)\\
&\leq N(\lambda)\big(\|(L-\lambda \pm\epsi)u\|_{L_p(\Omega)} + \|g\|_{W^{1-1/p}_p(\p\Omega)}\big) + N(\lambda)\epsi\|u\|_{L_p(\Omega)}
\end{align*}
implies
\begin{equation*}		
\|u\|_{W^2_p(\Omega)}\leq \frac{N(\lambda)}{1-N(\lambda)\epsi}\big(\|(L-\lambda \pm\epsi)u\|_{L_p(\Omega)} + \|g\|_{W^{1-1/p}_p(\p\Omega)}\big).
\end{equation*}
Then for every $\lambda\geq 0$, we can find a neighborhood $(\lambda-\epsi,\lambda+\epsi)$ on which \eqref{apriori} holds with a uniform constant $N$. Since $[0,\lambda_0]$ is compact, a finite upper bound of $N$ is attained.
	
Now we only need to prove \eqref{nodependenceapriori} under additional conditions \eqref{sign}-\eqref{alpha>d}. We first prove the following uniqueness result:
\begin{equation}\label{uniqueness}
\begin{cases}
u \in W^2_p(\Omega),\\
Lu = 0\quad &\mbox{ in } \Omega,\\
Bu = 0\quad &\mbox{ on } \p \Omega,
\end{cases}
\quad \mbox{has only zero solution}.
\end{equation}
When $p>d$, this uniqueness result is of Aleksandrov-Bakelman-Pucci type. Such result for the oblique derivative problem can be found in \cite[Corollary 2.5]{MR906819}: under the assumptions of Theorem \ref{well-posednessThm}, $u$ must be a constant. Then we obtain $u\equiv 0$ from \eqref{notalwayszero}. Note in this step, the boundedness of $\Omega$ is needed.

Now, for general $p\in (1,\infty)$, we need to use large $\lambda$ well-posedness from Lemma \ref{removeuLem} to improve regularity, i.e., to show that $u\in W^2_{d+\epsi}$. From Sobolev embedding and $u\in W^2_p$, we get $u \in L_q, p<q<\frac{dp}{d-2p}$. Now rewrite the equation as $(L-\lambda)u=-\lambda u$, and take $\lambda$ large enough. The $W^2_q$-existence and the $W^2_p$-uniqueness will tell us $u \in W^2_q$. Repeating if needed, we finally get $u\in W^2_{d+\epsi}$, where $\epsi$ satisfies $1-\frac{1}{d+\epsi}<\alpha$ and $\alpha$ is the H\"{o}lder exponent of the boundary data as in \eqref{alpha>d}. Then we can apply the result in \cite{MR906819} to get $u = 0$.
		
Passing from \eqref{uniqueness} to \eqref{nodependenceapriori} is a standard contradiction argument. Suppose \eqref{nodependenceapriori} were not true. With help of \eqref{mainestimate}, for all $k=1,2,\ldots$, there exist $u_k$ such that \begin{equation}\label{contradiction}
\|u_k\|_{L_p(\Omega)}> k (\|Lu_k\|_{L_p(\Omega)} + \|Bu_k\|_{W^{1-1/p}_p(\p \Omega)}).
\end{equation}
Without loss of generality, we take $\|u_k\|_{L_p(\Omega)}=1$. By \eqref{mainestimate} and \eqref{contradiction},
\begin{align*}
\|u_k\|_{W^2_p(\Omega)}&\leq N(\|Lu_k\|_{L_p(\Omega)}+\|Bu_k\|_{W^{1-1/p}_p(\p\Omega)} + \|u_k\|_{L_p(\Omega)})\\
&< N\big(\frac{1}{k}+1\big)\|u_k\|_{L_{p}}\\
&\leq 2N.
\end{align*}
Now, since $u_k$ is uniformly bounded in $W^2_p$, passing to a subsequence we have $u_{k_i}\rightarrow u$ weakly in $W^2_p(\Omega)$, and $u_{k_i}\rightarrow u$ strongly in $L_p(\Omega)$. Using \eqref{contradiction}, we obtain $\|Lu_{k_i}\|_{L_p}\rightarrow 0$, and $\|Bu_{k_i}\|_{W^{1-1/p}_p}\rightarrow 0$. We can deduce that $Lu=0, Bu=0$, and hence $u=0$ by the uniqueness. We have reached a contradiction, since $u_{k_i} \rightarrow u$ strongly in $L_p(\Omega)$ implies $\|u\|_{L_p(\Omega)}=1$.
\end{proof}

\section{Nonlinear equations}\label{nonlinearsection}
Similar to the Schauder estimate in \cite{Safonov}, our method also works for fully nonlinear equations with proper convexity conditions. In this section, we show this for Bellman equations which can be written as follows:
\begin{equation*}
\sup_{\omega}\{L^\omega u + f(\omega,x)\}:= \sup_{\omega}\{a_{ij}(\omega,x)D_{ij}u + a_i(\omega,x)D_i u + a_0(\omega,x) u + f(\omega,x)\} = 0.
\end{equation*}
Compared to the linear case, we have the following assumptions which are uniform in $\omega$: $a_{ij}(\omega,x)$ are measurable in $x$, symmetric, and satisfy
\begin{equation}\label{unifVMOciefficients}
\nu |\xi|^2 \leq a_{ij}(\omega,x)\xi_i\xi_j \leq \nu^{-1}|\xi|^2,\quad \forall x, \omega.
\end{equation}
\begin{equation}\label{unifboundeda}
\exists K>0, \text{ such that } \|a_i(\omega,\cdot), a_0(\omega,\cdot)\|_\infty<K,\quad \forall \omega.
\end{equation}

In contrast to Assumption \ref{smallBMOAss}, we state the following uniformly small BMO condition.
\begin{assumption}[$r_0,\theta$]\label{uniformVMOAss}
For a constant $\theta>0$, there exists an $r_0>0$ such that
\begin{equation*}
\sup_{x\in\Omega, 0<r< r_0}\dashint_{B_r(x)\cap\Omega}\sup_\omega|a(\omega,y)-(a)_{B_r(x)\cap\Omega}(\omega)| \,dy \leq \theta,
\end{equation*}
where $\theta$ is a positive constant to be specified later.
\end{assumption}
Under these settings, we have the following result which is analogous to Theorem \ref{maintheorem}.
\begin{theorem}\label{mainbellmanThm}
Assume that $\Omega$ is a Lipschitz domain and $p>d$. Let $u \in W^2_p(\Omega)$ be a solution to the Bellman equation:	
\begin{equation}\label{bellman}		
\begin{cases}
\sup_{\omega}\{L^\omega u + f(\omega,x)\}=0\quad &\mbox{in } \Omega,\\
Bu=b_0 u + b_iD_i u = g\quad &\mbox{on } \partial \Omega.
\end{cases}
\end{equation}
Assume that \eqref{bregularity},  \eqref{oblique}, \eqref{unifVMOciefficients} and \eqref{unifboundeda} hold. Also, $\bar{f}(x):= \sup_{\omega} |f(\omega,x)| \in L_p(\Omega), g\in W^{1-1/p}_p(\p \Omega)$. Then there exist $\theta_0=\theta_0(d,p,\nu)>0$ and $\epsi_0=\epsi_0(d,p,\nu, K)>0$ small enough, such that if the domain $\Omega$ is $\delta^2\epsi_0$-Lipschitz and Assumption \ref{uniformVMOAss} $(r_0, \delta\theta_0)$ holds, then we have,
\begin{equation}\label{bellmanmainestimte}
\|u\|_{W^2_p(\Omega)} \leq N\big(\|u\|_{L_p(\Omega)} + \|\bar{f}\|_{L_p(\Omega)} + \|g\|_{W^{1-1/p}_p(\partial\Omega)}\big).
\end{equation}
Here $N=N(d,p,\nu, K,\|b_0,b_0\|_\alpha, r_0, R_0,\delta)$ is a constant.
\end{theorem}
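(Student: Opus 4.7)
The plan is to run the same three-step scheme used for Theorem \ref{maintheorem}, replacing each appeal to the linear $W^2_p$ theory with its Bellman counterpart. The key structural facts that make this transfer painless are: (i) the boundary operator $B$ is still linear in $u$, so the extension construction in Theorem \ref{ExtensionThm} is unchanged; (ii) any affine shift of the interior unknown $u\mapsto u-v$ preserves a Bellman equation, simply modifying the control-dependent source $f(\omega,\cdot)\mapsto f(\omega,\cdot)+L^\omega v$; and (iii) flattening by the regularized distance $\rho_0$ turns $\sup_\omega\{L^\omega u + f(\omega,\cdot)\}=0$ into another Bellman equation whose transformed coefficients $\widetilde a^\omega_{ij}$ have uniformly small BMO in $z$ by \eqref{smalllip} and Assumption \ref{uniformVMOAss}.

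First I would establish a Bellman analogue of Lemma \ref{homogeneouslemma}: if $w\in W^2_p(\Omega_R)$ solves $\sup_\omega\{L^\omega w+F(\omega,\cdot)\}=0$ in $\Omega_R$ with $\partial w/\partial y^d=0$ on $\Gamma_R$, then after flattening via $z=(y',\rho_0(y))$, the transformed $\widetilde w$ satisfies, on $\Phi(\Omega_R)\subset\mathbb{R}^d_+$,
$$\sup_\omega\bigl\{\widetilde L^\omega\widetilde w+\widetilde F^\omega - a^\omega_{kl}D_{kl}\rho_0\,\partial_{z^d}\widetilde w\bigr\}=0,\qquad \partial_{z^d}\widetilde w=0\text{ on the flat portion}.$$
I would then invoke the $W^2_p$ estimate for Bellman equations with small BMO coefficients in $\mathbb{R}^d_+$ with zero Neumann data, which is obtained from the corresponding $\mathbb{R}^d$ estimate by even reflection exactly as in \cite{KimKrylov}, since both the $\sup_\omega$ structure and the partially-BMO condition are preserved by this reflection. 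The $D^2\rho_0$ error is controlled via \eqref{secondorderderivative} and Hardy's inequality precisely as in \eqref{hijestimate}, contributing at most $N\epsi_0\|D_z^2\widetilde w\|_p$, which is absorbed for $\epsi_0$ small. The nested-cylinder iteration and the change-of-variables back to $y$ at the end of the proof of Lemma \ref{homogeneouslemma} then deliver the local estimate for $w$.

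With this model estimate in hand, I would execute the boundary perturbation at $x_0\in\partial\Omega$ verbatim from Theorem \ref{maintheorem}: rotate so that $b(x_0)$ points along $\partial_{y^d}$, set $h:=g-b_0 u-(b-b(x_0))\cdot Du$ so that $Bu=g$ becomes $\partial u/\partial y^d=h$ on $\Gamma_r$, apply Theorem \ref{ExtensionThm} to obtain $v\in W^2_p(\Omega_r)$ with exactly the same bounds \eqref{estimatev}--\eqref{zerothorderv}, and observe that $w:=u-v$ satisfies $\sup_\omega\{L^\omega w+\widetilde F^\omega\}=0$ with $\widetilde F^\omega:=f(\omega,\cdot)+L^\omega v$ and $\sup_\omega|\widetilde F^\omega|\le\bar f+N(|D^2v|+|Dv|+|v|)$ by \eqref{unifVMOciefficients}--\eqref{unifboundeda}. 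Feeding this into the Bellman model estimate from Step 1 reproduces the chain leading to \eqref{cover} with $\|\bar f\|_p$ in place of $\|f\|_p$; the H\"{o}lder smallness $\|b-b(x_0)\|_{L_\infty(\Gamma_{3r})}\lesssim r^\alpha$ and the assumption $p>d$ (used through the Sobolev trace on $\Gamma_{3r}$) again supply the $Nr^\alpha\|u\|_{W^2_p(\Omega_{4r})}$ factor to be absorbed after a finite partition of unity and interpolation for $\|Du\|_p$.

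The main obstacle is the single genuinely nonlinear input used in Step 1: the $W^2_p$ estimate for Bellman equations with small-BMO coefficients in $\mathbb{R}^d_+$ with zero Neumann data, which is the source of the restriction $p>d$ in the statement. Everything else in the proof of Section \ref{mainstepssection}---the regularized-distance flattening, the Hardy absorption of the $D^2\rho_0$ term, the extension theorem for the boundary data, and the perturbation/covering scheme---is linear in $u$ and carries over without change.
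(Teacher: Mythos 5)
Your overall scheme coincides with the paper's: the extension and perturbation steps are indeed purely about $B$ and $\partial\Omega$ and carry over verbatim, and the flattening/Hardy reduction of the model problem is exactly as in Lemma \ref{homogeneouslemma}. The gap is in the one place where you chose to \emph{invoke} rather than prove: the $W^2_p$ estimate for Bellman equations in $\mathbb{R}^d_+$ with small BMO coefficients and zero Neumann data. You claim it follows from the whole-space estimate ``by even reflection exactly as in \cite{KimKrylov}, since both the $\sup_\omega$ structure and the partially-BMO condition are preserved.'' The reflection does preserve the $\sup_\omega$ structure, but it destroys the regularity hypothesis available in the nonlinear literature: to keep the equation for the evenly extended $u$ one must extend the coefficients $a^\omega_{id}$, $i<d$, \emph{oddly}, and these then jump across $\{x^d=0\}$, so the reflected coefficients are only measurable in $x^d$ (i.e.\ partially BMO), not small BMO in all variables. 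For linear equations this is exactly why one needs the Kim--Krylov/Dong partially-VMO theory; for Bellman equations no partially-BMO analogue exists --- the available inputs are the interior estimate with coefficients VMO in all variables \cite{bellmanvmointerior} and the Dirichlet boundary estimate \cite{bellmanvmodirichlet}, neither of which covers the reflected equation. So your Step 1 rests on a theorem that is not in the literature and does not follow by reflection.

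This missing estimate is precisely what the paper proves (Lemma \ref{bellmanmodelLem}), and its proof is the real content of the nonlinear section: a mean-oscillation estimate obtained by freezing the coefficients, solving the frozen-coefficient Bellman problem with mixed Neumann/Dirichlet data via Safonov's boundary Krylov--Evans $C^{2,\bar{\alpha}}$ theory \cite{safonov1994boundary}, controlling the remainder through Lin's $L_\gamma$ estimate for linear equations with merely measurable coefficients \cite{fanghualinroughcoefficient} (there even reflection \emph{is} legitimate, since that auxiliary equation is linear and only measurability is required), and concluding with the Hardy--Littlewood and Fefferman--Stein theorems together with a localization/partition-of-unity step; the restriction $p>d$ (in fact $p>\beta d$ for some $\beta>1$) enters at this stage, not merely as an inherited hypothesis. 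To complete your argument you would have to supply this half-space Neumann estimate by such a direct argument rather than by reflection; with that lemma in hand, the rest of your outline matches the paper's proof.
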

For the proof, we follow the scheme in Section \ref{mainstepssection} which is given for the linear case there. Recall for the linear case we prove the theorem in 3 steps: proving under the homogeneous boundary condition $\frac{\p u}{\p y^d}=0$; constructing an extension; the perturbation argument. The latter two steps still work since they only deal with $\p\Omega$ and the boundary operator $B$, and have nothing to do with the elliptic operator. Hence, we only need to give the proof of the first step.

Recall that in Section \ref{mainstepssection}, we use the regularized distance to flatten the boundary, then apply Hardy's inequality and the half space result. This argument still works except that we need to prove the corresponding $W^2_p$ estimate for the Bellman equation in half space with the Neumann boundary condition. In the following lemma, we adopt the assumptions in Theorem \ref{mainbellmanThm}, but in all places we replace $\Omega$ by $\mathbb{R}^d_{+}$.

\begin{lemma} \label{bellmanmodelLem}
Assume that $u\in W^2_p(\mathbb{R}^d_{+})$ solves
\begin{equation*}		
\begin{cases}
\sup_{\omega}\{L^\omega u + f(\omega,x)\}=0\quad &\mbox{in } \mathbb{R}^d_{+},\\
\frac{\p u}{\p x^d} = 0\quad &\mbox{on } \p\mathbb{R}^d_{+}.
\end{cases}
\end{equation*}
There exists a constant $\theta_0=\theta_0(d,p,\nu)>0$, such that if Assumption \ref{uniformVMOAss} is satisfied with $(r_0, \theta_0)$, then we have	
\begin{equation}\label{vmohalfspaceneumann}
\|D^2u\|_{L_p(\mathbb{R}^d_{+})} \leq N (\|\bar{f}\|_{L_p(\mathbb{R}^d_{+})} + \|u\|_{L_p(\mathbb{R}^d_{+})}).
\end{equation}
Again $\bar{f}(x):=\sup_{\omega}|f(\omega,x)|$. Here the constant $N$ depends on $d$, $p$, $\nu$, $K$, and $r_0$.
\end{lemma}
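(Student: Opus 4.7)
\textbf{Proof proposal for Lemma \ref{bellmanmodelLem}.} The plan is to mimic the strategy used in Lemma \ref{homogeneouslemma}: pass from the half space to the whole space by even reflection, and then invoke the global $W^2_p$ estimate for Bellman equations with (partially) small BMO coefficients on $\mathbb{R}^d$. The Neumann condition $\partial u/\partial x^d=0$ is exactly what is needed for the even extension to stay in $W^2_p$, and the Bellman structure is preserved under reflection because each linear operator $L^\omega$ transforms to another linear operator with the same ellipticity.

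First I would define $\bar u(x',x^d):=u(x',|x^d|)$ on $\mathbb{R}^d$. Since $u\in W^2_p(\mathbb{R}^d_+)$ and the trace of $D_d u$ vanishes on $\{x^d=0\}$, we have $\bar u\in W^2_p(\mathbb{R}^d)$ with $\|\bar u\|_{W^2_p(\mathbb{R}^d)}\le 2^{1/p}\|u\|_{W^2_p(\mathbb{R}^d_+)}$. Next I would extend the coefficients $\omega$-by-$\omega$ in the Kim--Krylov fashion: even extension
$$
\hat a_{ij}(\omega,x',x^d):=a_{ij}(\omega,x',|x^d|),\quad \hat a_i(\omega,\cdot):=a_i(\omega,x',|x^d|)\;(i<d),\quad \hat a_0,\hat f\text{ similarly},
$$
and odd extension for the ``mixed'' entries
$$
\hat a_{id}(\omega,x',x^d)=\hat a_{di}(\omega,x',x^d):=\operatorname{sgn}(x^d)\,a_{id}(\omega,x',|x^d|),\qquad \hat a_d(\omega,\cdot):=\operatorname{sgn}(x^d)\,a_d(\omega,x',|x^d|).
$$
Using the identities $D_{ij}\bar u(x',x^d)=D_{ij}u(x',|x^d|)$ for $i,j<d$ and $i=j=d$, $D_{id}\bar u(x',x^d)=\operatorname{sgn}(x^d)D_{id}u(x',|x^d|)$ for $i<d$, $D_i\bar u=D_iu(x',|x^d|)$ for $i<d$ and $D_d\bar u=\operatorname{sgn}(x^d)D_du(x',|x^d|)$, each linear operator $L^\omega u$ at $(x',|x^d|)$ is matched exactly by $\hat L^\omega\bar u$ at $(x',x^d)$. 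Consequently
$$
\sup_\omega\{\hat a_{ij}(\omega,x)D_{ij}\bar u+\hat a_i(\omega,x)D_i\bar u+\hat a_0(\omega,x)\bar u+\hat f(\omega,x)\}=0\quad\text{a.e. in }\mathbb{R}^d.
$$

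After that I would verify the hypotheses on the extended coefficients. Ellipticity and the $L_\infty$ bound are immediate from the pointwise definition (the sign flips correspond to $\xi_d\mapsto-\xi_d$ under which the quadratic form is invariant). Full BMO is not preserved through the hyperplane $\{x^d=0\}$ for the odd extension, but the Kim--Krylov observation applies: for each fixed $x^d$, the function $y'\mapsto \hat a(\omega,y',x^d)$ is $\pm a(\omega,y',|x^d|)$, so the oscillation in $y'$ at any fixed slice equals the original one. Hence the extended coefficients satisfy the \emph{partially} small BMO condition in the sense of $\omega'(\rho)$ with the same smallness constant (up to an absolute factor). Finally I would apply the $W^2_p$ estimate for Bellman equations on $\mathbb{R}^d$ with partially small BMO coefficients; this is the Bellman analogue of the linear result in \cite{DongVMOallp} and is available in \cite{Krylovbook} (valid under the assumption $p>d$ imposed in Theorem \ref{mainbellmanThm}). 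Applied to $\bar u$, it yields
$$
\|D^2\bar u\|_{L_p(\mathbb{R}^d)}\le N\big(\|\bar{\hat f}\|_{L_p(\mathbb{R}^d)}+\|\bar u\|_{L_p(\mathbb{R}^d)}\big),
$$
where $\bar{\hat f}(x):=\sup_\omega|\hat f(\omega,x)|=\bar f(x',|x^d|)$, and restricting to $\mathbb{R}^d_+$ gives \eqref{vmohalfspaceneumann}.

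The main obstacle I anticipate is not any of the extension bookkeeping, which is mechanical, but rather pinning down the precise citation for the Bellman $W^2_p$ estimate on $\mathbb{R}^d$ with partially small BMO coefficients in the correct generality (i.e.\ $p>d$, uniform bounds in $\omega$, and the ``sup-of-oscillation'' BMO of Assumption \ref{uniformVMOAss} rather than the weaker ``oscillation-of-sup''). Once that reference is settled, the rest of the argument runs in parallel with the proof of Lemma \ref{homogeneouslemma} except that there is no need for the regularized distance or Hardy's inequality here, because the model problem already sits in the half space with a flat boundary.
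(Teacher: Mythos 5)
There is a genuine gap, and it sits exactly where you flag it yourself: the whole-space result you want to cite does not exist in the paper's references (and is not available in the generality you need). Your reflection bookkeeping is correct --- the identities for $D_{id}\bar u$, the odd extension of $\hat a_{id}$, $\hat a_d$, and the preservation of the Bellman structure all check out --- but the price of the odd extension is that the extended coefficients are only \emph{partially} small BMO: measurable in $x^d$ (with a genuine jump across $\{x^d=0\}$) and small BMO in $x'$. For \emph{linear} equations this is exactly the Kim--Krylov/Dong setting of \cite{KimKrylov,DongVMOallp}, which is why the analogous reflection works in Lemma \ref{homogeneouslemma}. For Bellman equations, however, no $W^2_p$ theory with coefficients merely measurable in one direction is cited or known here: \cite{bellmanvmointerior} requires small BMO in all variables (interior estimate), \cite{bellmanvmodirichlet} treats the Dirichlet problem with VMO-in-all-variables coefficients, and \cite{Krylovbook} is a linear theory. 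The obstruction is not cosmetic: the mean-oscillation method for fully nonlinear equations needs interior $C^{2,\bar\alpha}$ estimates for the frozen equation, and after your reflection the natural frozen equation is $\sup_\omega\{a_{ij}(\omega,x^d)D_{ij}v\}=0$ with only measurable dependence on $x^d$, for which the required regularity is not available. So the reduction "reflect, then quote the whole-space Bellman estimate" replaces the lemma by an unproved (and harder) statement.

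This is precisely why the paper does not reflect the Bellman equation at all. Its proof works directly in the half space: after localization it proves a mean-oscillation estimate for $D^2u$ by freezing the coefficients and splitting $u=v+w$, where $v$ solves the constant-coefficient Bellman equation in a half ball with the \emph{mixed} boundary condition (Neumann on the flat portion, Dirichlet data on the spherical portion), handled by the Krylov--Evans type $C^{2,\bar\alpha}$ estimate of \cite[Theorem~8.1]{safonov1994boundary}; the remainder $\hat w$ solves a \emph{linear} equation with merely measurable coefficients, and only there is the even extension used, so that Lin's estimate \eqref{fhlin} from \cite{fanghualinroughcoefficient} applies (measurable coefficients are harmless for that estimate, unlike for the Bellman equation). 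The conclusion then follows from the maximal and Fefferman--Stein sharp function theorems, choosing $\kappa$ large and $\theta$ small to absorb $\|D^2u\|_p$, plus a partition of unity and interpolation. If you want to keep your outline, you would have to first prove the whole-space Bellman estimate with partially BMO coefficients --- a substantially stronger statement than the lemma itself --- so the correct move is to argue in the half space as the paper does.
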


Before we start the proof, we would like to mention that there are similar results in \cite{bellmanvmointerior,bellmanvmodirichlet}. In \cite{bellmanvmointerior}, an interior $W^2_p$ estimate for Bellman equations with small BMO coefficients was established. The corresponding boundary estimate under the Dirichlet boundary condition was proved in \cite{bellmanvmodirichlet}. Our proof here follows similar steps in these two papers.

\begin{proof}
Using localization techniques, we may assume $u$ has compact support in $B^{+}_{r_0}(z)$. Here $r_0$ is the radius in Assumption \ref{uniformVMOAss} and $z \in \mathbb{R}^d_{+}$. For such $u$, we aim to prove
	
\begin{equation}\label{average}
\begin{split}	
&\dashint_{B_r^{+}(x_0)}\dashint_{B_r^{+}(x_0)}|D^2u(x)-D^2u(y)|^{\gamma}\,dx\,dy \leq
N\kappa^d\big(\dashint_{B_{\kappa r}^{+}(x_0)}(\bar{f}+|Du|+|u|)^d \,dx\big)^{\gamma/d}\\
&\quad +N\kappa^d\theta^{(1-1/\beta)\gamma/d}\big(\dashint_{B_{\kappa r}^{+}(x_0)}|D^2u|^{\beta d} \,dx\big)^{\gamma/(\beta d)} + N\kappa^{-\gamma\bar{\alpha}}\big(\dashint_{B_{\kappa r}^{+}(x_0)}|D^2u|^d\,dx\big)^{\gamma/d}.
\end{split}
\end{equation}
Here $r\in(0,\infty),\beta\in(1,\infty),\kappa\geq 16$ are parameters which can be chosen arbitrarily, and $\bar{\alpha}=\bar{\alpha}(d,\nu)\in(0,1),N=N(d,\nu),\gamma=\gamma(d,\nu)\in(0,1]$ are all determined constants. The center $x_0$ can be any fixed point in $\mathbb{R}^d_{+}$.
	
When $B_{\kappa r}(x_0) \subset \mathbb{R}^{d}_{+}$, this is the interior estimate in \cite{bellmanvmointerior}. Hence we only need to prove for $x_0$ close to the boundary. For simplicity, here we only prove for $x_0 \in \p\mathbb{R}^d_{+}$, and write $B^{+}_{\kappa r}$ for $B^{+}_{\kappa r}(x_0)$. For general $x_0$ with $B_{\kappa r}(x_0) \not\subset \mathbb{R}^{d}_{+}$, the proof is similar.

We shall use the frozen coefficient argument. Define $$\bar{a}_{ij}^{\omega}:=\begin{cases}
(a_{ij}^\omega)_{B^{+}_{\kappa r}}\quad &\mbox{ if } \kappa r\leq r_0,\\
(a_{ij}^\omega)_{B^{+}_{r_0}(z)}\quad &\mbox{ if } \kappa r> r_0.
\end{cases}$$
Then we decompose $u=v+w$, where $v$ solves the boundary value problem, i.e.,
\begin{equation}\label{constantbellman}
\begin{cases}
\sup\{\bar{a}_{ij}(\omega)D_{ij}v\}=0\quad &\mbox{in }B_{\kappa r}^{+},\\
\frac{\p v}{\p x^d}=0 \quad &\mbox{on } \Gamma:= \p\mathbb{R}^d_{+}\cap B_{\kappa r}^{+},\\
v=\hat{u}:=u-\sum_{i=2}^{d}x^i(D_iu)_{B_{\kappa r}^{+}} - (u)_{B_{\kappa r}^{+}} \quad &\mbox{on } \p B_{\kappa r}^{+}\setminus\Gamma.
\end{cases}
\end{equation}
Such $\hat{u}$ has properties: $D^2\hat{u} = D^2u$, and by even extension and Sobolev and Poincar\'e inequalities,
$$
\sup_{B_{\kappa r}^{+}}|\hat{u}|\leq \kappa r \|D^2u\|_{L_d(B_{\kappa r}^{+})}.
$$
See \cite[Lemma~2.1]{bellmanvmodirichlet}.
For the equation $\eqref{constantbellman}$, we can find a Krylov-Evans type existence result for mixed boundary conditions in \cite[Theorem~8.1]{safonov1994boundary}: there exists $v\in C^{2,\bar{\alpha}}_{loc}(B_{\kappa r}^{+}\cup \Gamma)\cap C^0(\overline{B_{\kappa r}^{+}})$ solving \eqref{constantbellman}, where $\bar{\alpha}=\bar{\alpha}(d,\nu)$ is some constant between $0$ and $1$. The strong maximum principle tells us: $$\sup_{B_{\kappa r}^{+}}|v|\leq\sup_{\p B_{\kappa r}^{+}\setminus \Gamma}|\hat{u}|.$$
From the equation of $v$, the Krylov-Evans estimate in \cite[Theorem~8.1]{safonov1994boundary} and a dilation argument, we have: for all $x,y \in B_{r}^{+}(x_0)$,
\begin{align*}
|D^2v(x)-D^2v(y)|&\leq |x-y|^{\bar{\alpha}} [D^2v]_{C^{\bar{\alpha}}(B^{+}_r(x_0))}\\
&\leq N|x-y|^{\bar{\alpha}}(\kappa r-r)^{-2-\bar{\alpha}}\sup_{B_{\kappa r}^{+}} |v|\\
&\leq N|x-y|^{\bar{\alpha}}(\kappa r)^{-2-\bar{\alpha}}\sup_{\p B_{\kappa r}^{+}\setminus \Gamma} |\hat{u}|\\
&\leq N|x-y|^{\bar{\alpha}}(\kappa r)^{-1-\bar{\alpha}}\|D^2 u\|_{L_d(B_{\kappa r}^{+})}.
\end{align*}
Integrating $x,y$ in $B_r^{+}$, we obtain
\begin{equation}\label{vestimate}
\dashint_{B_r^{+}(x_0)}\dashint_{B_r^{+}(x_0)}|D^2v(x)-D^2v(y)|^{\gamma}\,dx\,dy \leq N\kappa^{-\gamma\bar{\alpha}}\big(\dashint_{B_{\kappa r}^{+}}|D^2u|^d\big)^{\gamma/d}, \quad \forall \gamma \in (0,1].
\end{equation}
Now let us consider $D^2w$. Since $D^2\hat{u}=D^2u$, it is equivalent to discuss $\hat{w}:= \hat{u}-v$ instead of $w$  $(=u-v)$. From the equation of $w$, one can simply show that $\hat{w}$ satisfies
\begin{align*}
\sup\{\bar{a}_{ij}(\omega)D_{ij}\hat{w}+f(\omega,x) + a_i(\omega,x)D_iu + a_0(\omega,x)u + (a_{ij}(\omega,x)- \bar{a}_{ij}(\omega))D_{ij}u\} \geq 0,\\ \inf\{\bar{a}_{ij}(\omega)D_{ij}\hat{w}+f(\omega,x) + a_i(\omega,x)D_iu + a_0(\omega,x)u + (a_{ij}(\omega,x)- \bar{a}_{ij}(\omega))D_{ij}u\} \leq 0.
\end{align*}
One important observation here is that, we can rewrite the equation for $\hat{w}$ as follows,
\begin{equation}\label{equationforw}
\begin{cases}
\hat{L}\hat{w}:=\hat{a}_{ij}(x)D_{ij}\hat{w}(x) = \hat{f}(x)\quad &\mbox{in } B_{\kappa r}^{+},\\
\frac{\p\hat{w}}{\p x^d}=0\quad &\mbox{on }\Gamma,\\
\hat{w}=0\quad &\mbox{on }\p B_{\kappa r}^{+}\setminus\Gamma.
\end{cases}
\end{equation}
Here $\hat{a}_{ij}$ are measurable and symmetric with $\nu |\xi|^2 \leq \hat{a}_{ij}\xi_i\xi_j \leq \nu^{-1}|\xi|^2$, and $\hat{f}$ is also measurable, with $$|\hat{f}|\leq \bar{f} + K(|Du|+|u|) + \sup_\omega |a_{ij}(\omega,x)- \bar{a}_{ij}(\omega)||D^2u|.$$
From \cite{fanghualinroughcoefficient}, we have the following estimate for uniform elliptic operators with only measurable coefficients:
\begin{equation}\label{fhlin}
\|D^2h\|_{L_\gamma(B_1)}\leq N \|L_\nu h\|_{L_d(B_1)}.
\end{equation}
Here $h\in W^2_d(B_1), h=0$ on $\partial B_1$, $L_\nu$ is any bounded and uniform elliptic operator (i.e. \eqref{unifellipticciefficients} holds with constant $\nu$) of which the coefficients are only measurable, and $\gamma=\gamma(d,\nu)\in(0,1]$. For our equation \eqref{equationforw} which is in half ball with the Neumann boundary condition, to apply \eqref{fhlin}, we take even extension for $\hat{w},\hat{f}$, and correspondingly for $\hat{a}_{ij}$. We derive that for some $\gamma=\gamma(d,\nu) \in (0,1]$,
\begin{align*}
&\big(\dashint_{B^{+}_{r}(x_0)} |D^2 \hat{w}(x)|^\gamma \,dx\big)^{1/\gamma} \leq \kappa^{d/\gamma} \big(\dashint_{B_{\kappa r}^{+}}|D^2\hat{w}(x)|^\gamma \,dx\big)^{1/\gamma}\\
&\leq N\kappa^{d/\gamma}\big(\dashint_{B_{\kappa r}^{+}}|\hat{L}\hat{w}(x)|^d \,dx\big)^{1/d}\\
&\leq N\kappa^{d/\gamma}\bigg(\dashint_{B_{\kappa r}^{+}}\big(\bar{f} + K(|Du|+|u|) + \sup_\omega |a_{ij}(\omega,x)- \bar{a}_{ij}(\omega)||D^2u|\big)^d\bigg)^{1/d}\\
&\leq N\kappa^{d/\gamma}\big(\dashint_{B_{\kappa r}^{+}}(\bar{f} + |Du| + |u|)\big)^{1/d} + N \kappa^{d/\gamma}\theta^{(1-1/\beta)/d}\big(\dashint_{B_{\kappa r}^{+}} |D^2u|^{\beta d}\big)^{1/(\beta d)}.
\end{align*}
Here $\beta>1$ is any constant satisfying $\beta d<p$. In the last step, we use the fact that $u$ has compact support in $B^{+}_{r_0}(z)$, H\"{o}lder's inequality, and Assumption \ref{uniformVMOAss}:
\begin{align*}
\|\sup_\omega |a_{ij}(\omega,x)-& \bar{a}_{ij}(\omega)|D^2u\|_{L_{d}(B_{\kappa r}^{+})}\\
\leq& \|\sup_\omega |a_{ij}(\omega,x)- \bar{a}_{ij}(\omega)|1_{B^{+}_{r_0}(z)}\|_{L_{\beta d/(\beta-1)}(B_{\kappa r}^{+})}\|D^2u\|_{L_{\beta d}(B_{\kappa r}^{+})}\\
\leq& N(r_0,d)\theta^{(1-1/\beta)/d}\|D^2u\|_{L_{\beta d}(B_{\kappa r}^{+})}.
\end{align*}
Then we have
\begin{equation}\label{westimate}
\begin{split} &\dashint_{B_r^{+}(x_0)}\dashint_{B_r^{+}(x_0)}|D^2\hat{w}(x)-D^2\hat{w}(y)|^{\gamma}\,dx\,dy\\
&\quad \leq N\kappa^d\big(\dashint_{B_{\kappa r}^{+}(x_0)}(\bar{f}+|Du|+|u|)^d \,dx\big)^{\gamma/d} + N\kappa^d\theta^{(1-1/\beta)\gamma/d}\big(\dashint_{B_{\kappa r}^{+}}|D^2u|^{\beta d} \,dx\big)^{\gamma/(\beta d)}.
\end{split}
\end{equation}
Combining \eqref{vestimate} and \eqref{westimate}, and noticing that $D^2\hat{w}=D^2w$, we obtain \eqref{average}.
	
Once we have \eqref{average}, take supreme in $r$, we have for any $\kappa\geq 16$,
\begin{equation}\label{maximalsharp}
\begin{split}
(D^2u)^{\#}_\gamma\leq& N \kappa ^{d/\gamma}\mathbb{M}^{1/d}(\bar{f}^d) + N \kappa ^{d/\gamma}\mathbb{M}^{1/d}(|u|^d) + N \kappa ^{d/\gamma}\mathbb{M}^{1/d}(|Du|^d)\\
&+ N\kappa^{d/\gamma}\theta^{(1-1/\beta)/d}\mathbb{M}^{1/(\beta d)}(|D^2u|^{\beta d}) + N\kappa^{-\bar{\alpha}}\mathbb{M}^{1/d}(|D^2u|^d).
\end{split}
\end{equation}
Here $\mathbb{M}$ is the classical centered maximal function
$$\mathbb{M}f(x_0):=\sup_{r>0}\dashint_{B^{+}_r(x_0)}|f(x)|\,dx$$
and $(D^2u)^{\#}_\gamma$ is the following sharp function
$$(D^2u)^{\#}_\gamma(x_0):=	\sup_{r>0}\Big(\dashint_{B_r^{+}(x_0)}\dashint_{B_r^{+}(x_0)}
|D^2u(x)-D^2u(y)|^{\gamma}\,dx\,dy\Big)^{1/\gamma}.$$
Now, taking the $L_p$ norm for both sides of \eqref{maximalsharp}, applying the Hardy-Littlewood theorem and a Fefferman-Stein type sharp function theorem which can be found in the appendix of \cite{bellmanvmointerior}, we get for $p>\beta d$,
$$\|D^2u\|_p\leq N\kappa^{d/\gamma}(\|\bar{f}\|_p+\|Du\|_p+\|u\|_p) + N(\kappa^{d/\gamma}\theta^{(1-1/\beta)/d} + \kappa^{-\bar{\alpha}})\|D^2u\|_p,$$
where $\|\cdot\|_p$ is the abbreviation for $\|\cdot\|_{L_p(\mathbb{R}^d_{+})}$. We are left to choose $\kappa$ sufficient large and then $\theta$ sufficient small to absorb $\|D^2u\|_p$ into the left-hand side, and then use the interpolation inequality to obtain \eqref{vmohalfspaceneumann}.

Noting that in the above computation, we require $u$ to have compact support in $B^{+}_{r_0}(z)$. We may reduce our original problem to this, by using partition of unity $\xi_i$ for the covering $\{B^{+}_{r_0}(z_i)\}_i$ of half space. In \eqref{average}, we substitute $u$ by $u\xi_i$ and $\bar{f}$ by $\bar{f}\xi_i+\sup_\omega\{|[\xi_i,L^\omega]u|\}$, and then sum over all $i$. Here $[\cdot,\cdot]$ is the usual notation for commutators. Noting that all the extra terms are of lower order, this will only add $\|Du\|_p$ and $\|u\|_p$ to the right-hand side of our estimate. To get \eqref{vmohalfspaceneumann} we use the interpolation inequality $\|Du\|_p\leq \epsi \|D^2u\|_p + N(\epsi)\|u\|_p$.
\end{proof}
In a subsequent paper, we will consider the $W^2_p$ estimate and solvability for more general fully nonlinear operators with the oblique derivative boundary condition.

\appendix

\section{Regularized Distance for Small Lipschitz Domain}
In this section, we sketch the proof for Theorem \ref{RegularizedDistanceThm}. In \cite[Theorem~1.1]{LiebermanRegularizedDistance}, the construction of regularized distance in Lipschitz domain is given, along with property \eqref{distance}. As for properties \eqref{smalllip} and \eqref{secondorderderivative} concerning the derivatives, we modify the proof of \cite[Theorem~1.3]{LiebermanRegularizedDistance}, which requires $\p \Omega \in C^1$.
\begin{proof}[Proof of Theorem \ref{RegularizedDistanceThm}]
Let $g(y)=\psi(y')-y^d$. Since $\Omega$ is Lipschitz, as in \cite[Section~2]{LiebermanRegularizedDistance}, if we take $M=2(4/\delta^2+1)^{1/2}$, then
$$
[g]_1\leq M/2,\quad (M/2)^{-1}\leq g/d_y \leq M/2\quad \text{in}\, Q_{4R}(x_0)\setminus \Gamma_{4R}.
$$
Here $d_y$ is the distance between $y$ and the boundary.

We construct the regularized distance as the fixed point of the following:
\begin{equation}\label{fixedpoint}
\rho(y)=G(y,\rho(y)),
\end{equation}
where $G$ is the mollification of $g$, $$G(y,\tau):=\int_{|w|<1}g(y-\frac{\tau}{M}w)\phi(w)\,dw.$$
Here $\phi \in C_c^\infty(B_1), \phi\geq0, \int \phi =1$.

Clearly $[G(\cdot,\tau)]_1\leq [g]_1$. The key fact of such $G$ is that: $[G(y,\cdot)]_1\leq 1/2$. From the contraction mapping theorem, we get the unique fixed point in $C^0$, denoted by $\rho_0$. We first show $\rho_0\in C^{0,1}(Q_R)\cap C^\infty(Q_R\setminus \Gamma_R)$ and \eqref{distance}.

Using $[G(y,\cdot)]_1\leq 1/2$, from \begin{align*}
|\rho_0(y)-\rho_0(z)|&\leq|G(y,\rho_0(y))-G(y,\rho_0(z))|+|G(y,\rho_0(z))-G(z,\rho_0(z)|)\\
&\leq [G(y,\cdot)]_1|\rho_0(y)-\rho_0(z)|+[G(\cdot,\rho(z))]_1,
\end{align*}
we can obtain $$[\rho_0]_1\leq 2[G(\cdot,t)]_1\leq 2[g]_1 \leq M.$$ Hence $\rho_0 \in C^{0,1}(Q_R)$. The fact that $\rho_0 \in C^\infty(Q_R\setminus \Gamma_R)$ follows from implicit function theorem and the fact that $\phi \in C^\infty$.

For \eqref{distance}, we write $$|\rho_0(y)-G(y,0)|=|G(y,\rho_0(y))-G(y,0)|\leq \frac{1}{2}|\rho_0(y)|.$$ Noting that $G(y,0)=g(y)$, we have $\frac{2}{3}\leq \frac{\rho_0}{g(y)}\leq 2$. Since $g$ is a distance function, \eqref{distance} is proved.

Now we turn to proving \eqref{smalllip} and \eqref{secondorderderivative}. We modify the proof of \cite[Theorem~1.3]{LiebermanRegularizedDistance}, which works for $C^1$ domains, to adapt to the case of small Lipschitz domains. Denote $$G_i(y,\tau):= \frac{\p}{\p y^i}G(y,\tau),\quad i=1,\ldots,d, \quad  G_{d+1}(y,\tau):=\frac{\p}{\p \tau}G(y,\tau).$$
Differentiating both sides of \eqref{fixedpoint} at the fixed point $\rho_0$, and applying the chain rule, we get
\begin{align*}
|D_i\rho_0(y)-D_i\rho_0(z)|\leq |&G_i(y,\rho_0(y))-G_i(z,\rho_0(z))|+|D_i\rho_0(y)||G_{d+1}(y,\rho_0(y))-\\
&-G_{d+1}(z,\rho_0(z))|+||G_{d+1}(z,\rho_0(z))|D_i\rho_0(y)-D_i\rho_0(z)|.
\end{align*}
Again, using $[G(z,\cdot)]_1\leq 1/2, [\rho_0]_1\leq M$, we have
\begin{align}\label{Drhodifference}
&|D_i\rho_0(y)-D_i\rho_0(z)|\nonumber\\
&\leq 2|G_i(y,\rho_0(y))-G_i(z,\rho_0(z))| + 2M|G_{d+1}(y,\rho_0(y))-G_{d+1}(z,\rho_0(z))|.
\end{align}
Now we use the explicit expression for $g$, i.e., $g=\psi(y')-y^d$, and the small Lipschitz property \eqref{epsilonlipschitz} given in terms of $D\psi$. Noting that for $g$ Lipschitz continuous, we can still differentiate under the integral sign in \eqref{fixedpoint}.
\begin{equation*}
G_i(y,\rho_0(y))=\begin{cases}
\int_{|w|<1} -\phi(w)\,dw,\quad i=d,\\
\int_{|w|<1} D_i\psi(y'-\frac{\rho_0(y)}{M}w')\phi(w)\,dw,\quad i<d,
\end{cases}
\end{equation*}
\begin{equation*}
G_{d+1}(y,\rho_0(y))=\int_{|w|<1} -D_k\psi\big(y'-\frac{\rho_0(y)}{M}w'\big)\frac{w^k}{M}\phi(w)\,dw.
\end{equation*}
Substituting back to \eqref{Drhodifference} and using \eqref{epsilonlipschitz}:  $|D\psi(y')-D\psi(z')|<3\epsi_0$, we obtain
$$|D_i\rho(y)-D_i\rho(z)|\leq 2\cdot3\epsi_0 + 2M\cdot\frac{3\epsi_0}{M}=12\epsi_0.$$
The proof for \eqref{secondorderderivative} is similar.	Differentiating both sides of $\rho_0(y)=G(y,\rho_0(y))$, we get
$$D_{ij}\rho_0(y)=G_{ij} + G_{i,d+1}D_j\rho_0 + G_{j,d+1}D_i\rho_0 + G_{d+1,d+1}D_i\rho_0D_j\rho_0 + G_{d+1}D_{ij}\rho_0.$$
Here $G_{ij}(y,\tau) = \frac{\p^2}{\p y^i\p y^j}G(y,\tau)$, and so on. Using $[G(z,\cdot)]_1\leq 1/2, [\rho_0]_1\leq M$, we have
$$|D_{ij}\rho_0(y)| \leq 2(|G_{ij}| + M|G_{i,d+1}| + M|G_{j,d+1}| + M^2|G_{d+1,d+1}|).$$
Thus we only need to estimate $D^2G$. Here we will only compute $G_{ij}$ with $i,j<d$, while the others are similar.
\begin{align}
G_{ij}(y,\tau) &= D_i\int_{|w|<1}D_j\psi\big(y'-\frac{\tau}{M}w'\big)\phi(w)\,dw\nonumber\\
&=D_i\int_{|w|<1}\frac{M}{\tau}\psi\big(y'-\frac{\tau}{M}w'\big)D_j\phi(w)\,dw\label{ibp}\\
&=\int_{|w|<1}\frac{M}{\tau}D_i\psi\big(y'-\frac{\tau}{M}w'\big)D_j\phi(w)\,dw\label{beforedifference}.
\end{align}
Here in \eqref{ibp} we integrated by parts, noticing that
$$\frac{\p\psi(y'-\frac{\tau}{M}w')}{\p w^i}=-\frac{\tau}{M}D_i\psi\big(y'-\frac{\tau}{M}w'\big).$$
Since $\int_{|w|<1} D_j\phi(w)\,dw = 0$, we may rewrite \eqref{beforedifference} as follows:
$$
\int_{|w|<1}\frac{M}{\tau}
\big(D_i\psi(y'-\frac{\tau}{M}w')-D_i\psi(y')\big)D_j\phi(w)\,dw.
$$
Now we can apply \eqref{epsilonlipschitz} to deduce
\begin{align*}
|G_{ij}(y,\rho_0(y))|&\leq \int_{|w|<1}\frac{M}{|\rho_0(y)|}
\big|D_i\psi(y'-\frac{\rho_0(y)}{M}w')-D_i\psi(y')\big||D_j\phi(w)|\,dw\\
&\leq N\frac{\epsi_0}{|\rho_0(y)|}.
\end{align*}
The theorem is proved.
\end{proof}

\section{Inequalities for Regularized Mollification}
We give the proof of Lemma \ref{YoungLem}:
\begin{proof}
The proof of \eqref{young_Lp} follows from the Minkowski inequality,
\begin{equation*}
\big\|\int f(x,\cdot) \,dx\big\|_p\leq \int \|f(x,\cdot)\|_p \,dx.
\end{equation*}
From the definition of $\widetilde{g}$, we obtain
\begin{align}
\|\widetilde{g}(\cdot)\|_{L_{p}(\Omega_R)} &= \big\|\int g(\cdot-\frac{\rho_0(\cdot)}{M_1}w)\phi(w)\,dw\big\|_{L_p(\Omega_R)} \nonumber\\
&\leq \int\| g(\cdot-\frac{\rho_0(\cdot)}{M_1}w)\|_{L_p(\Omega_R)}\phi(w)\,dw \nonumber\\
&\leq 2^{1/p}\|g\|_{L_p(\Omega_{2R})}\|\phi\|_{L_1}\label{laststepyoung}.
\end{align}
Here \eqref{laststepyoung} is due to $y-\frac{\rho_0(y)}{M_1}w \in \Omega_{2R}, \forall y\in \Omega_R, w \in B_1$, and the Jacobian of the change of variables $y\mapsto (y-\frac{\rho_0(y)}{M_1}w)$ has a positive lower bound. One could compute this Jacobian as follows,
\begin{equation*}
\frac{\p (y-\frac{\rho_0(y)}{M_1}w)}{\p y} = I_d - \frac{D\rho_0}{M_1}w.
\end{equation*}
Thus we have $|I_d - \frac{D\rho_0(y)}{M_1}w|\geq \frac{1}{2}$ from our choice of $M_1$ in Definition \ref{DefMollification}.

The proof of \eqref{young_W1p} is similar, since
\begin{equation*}
D\widetilde{g}(y) = \int \big(I_d - \frac{D\rho_0}{M_1}w\big) Dg(y-\frac{\rho_0(y)}{M_1}w)\phi(w)\,dw
\end{equation*}
and $D\rho_0$ is bounded.
\end{proof}



\end{document}